\title{Connectivity for random graphs from a weighted bridge-addable class}
\date{31 July 2012}
  \newenvironment{proof}{\noindent{\bf Proof} \hspace{.1in}}{\hspace*{\fill}$\Box$}
  \newenvironment{proofof}[1]{%
  \noindent {\bf Proof of #1}}%
  {\hspace*{\fill}$\Box$}
  \newtheorem{theorem}{Theorem}[section]
  \newtheorem{lemma} [theorem] {Lemma}
  \newtheorem{conjecture} [theorem] {Conjecture}
\let\eps=\epsilon
\def\enddiscard{}
\long\def\discard#1\enddiscard{}
  \def\Po{\mbox{\rm Po}}
  \newcommand{\E}{\mathbb E}
  \newcommand{\pr}{\mathbb P}
  \newcommand{\inu}{\in_{u}}
  \newcommand{\intau}{\in_{\tau}}
  \newcommand{\mass}[1]{\mbox{mass}(#1)}
  \newcommand{\cp}{{\mathcal P}}
  \newcommand{\cq}{{\mathcal Q}}
  \newcommand{\ca}{{\mathcal A}}
  \newcommand{\cb}{{\mathcal B}}
  \newcommand{\cc}{{\mathcal C}}
  \newcommand{\cf}{{\mathcal F}}
  \newcommand{\cg}{{\mathcal G}}
  \newcommand{\ct}{{\mathcal T}}
  \newcommand{\RF}{R^{\cf}}
  \newcommand{\RT}{R^{\ct}}
  \newcommand{\RG}{R^{G_0}}
  \newcommand{\frag}{\mbox{\rm frag}}
  \newcommand{\wfrag}{\mbox{\rm wfrag}}
  \newcommand{\Bridge}{\rm Bridge}
  \newcommand{\Cross}{\rm Cross}
  \newcommand{\cross}{\rm cross}
  \newcommand{\ttau}{\tilde{\tau}}
  \newcommand{\Go}{G^{o}}
  \newcommand{\cao}{\ca^{o}}
\begin{document}

\author{Colin McDiarmid \\ Department of Statistics \\ Oxford University}
\maketitle

\begin{abstract}
  There has been much recent interest in random graphs sampled uniformly from the
  $n$-vertex graphs in a suitable structured class, such as the class of all planar graphs.
  Here we consider a general \emph{bridge-addable} class $\ca$ of graphs --
  if a graph is in $\ca$ and $u$ and $v$ are vertices in different components
  then the graph obtained by adding an edge (bridge) between $u$ and $v$ must also be in $\ca$. 
  Various bounds are known concerning the probability of a random graph from such a
  class being connected or having many components, sometimes under the additional assumption that bridges
  can be deleted as well as added.
  Here we improve or amplify or generalise these bounds (though we do not resolve the main conjecture).
  For example, we see that the expected number of vertices left when we remove a largest component is less than $2$.
  The generalisation is to consider `weighted' random graphs,
  sampled from a suitable more general distribution,
  where the focus is on the bridges.
  %
\end{abstract}


\section{Introduction}
\label{sec.intro}

  A \emph{bridge} in a graph is an edge $e$ such that the graph $G \setminus e$ obtained by deleting $e$ has
  one more component. 
  A class $\ca$ of graphs is {\em bridge-addable} if for all graphs $G$ in $\ca$ and all vertices
  $u$ and $v$ in distinct connected components of $G$, the graph $G +uv$ obtained by adding an
  edge between $u$ and $v$ is also in $\ca$.
  The concept of being bridge-addable (or {`weakly addable'}) was introduced in 
  McDiarmid, Steger and Welsh~\cite{msw05} in the course of studying random planar graphs.
  (For an overview on random planar graphs see the survey paper~\cite{gn09b} of Gim\'enez and Noy.)
  Examples of bridge-addable classes of graphs include forests, series-parallel graphs,
  planar graphs, and indeed graphs embeddable on any given surface.
  
  In the rest of this section, we first describe what is known concerning connectedness and components for
  random graphs sampled uniformly from a bridge-addable class;
  then describe the new results here for such random graphs; and finally briefly discuss random rooted graphs.
  Random graphs from a weighted class are introduced in Section~\ref{sec.rwg}; and new general results are presented,
  which extend the results on uniform random graphs.
  After that come the proofs, first for non-asymptotic results then for asymptotic results and finally for the rooted case.  
  \medskip
  
  \noindent
  {\em Background results for uniform random graphs}
  
  If $\ca$ is finite and non-empty we write $R \inu \ca$ to mean that $R$ is a random graph sampled
  {\bf u}niformly from $\ca$.  (We consider graphs to be labelled.)
  The basic result on connectivity for a bridge-addable set of graphs is
  Theorem 2.2 of~\cite{msw05}:
  if $\ca$ is a finite bridge-addable set of graphs and $R \in_u \ca$ then
\begin{equation} \label{eqn.b-a-conn}
  \pr(R \mbox{ is connected}) \geq e^{-1} .
\end{equation}
  Indeed, a stronger result is given in~\cite{msw05}, concerning the number $\kappa(R)$ of components of $R$;
  namely that $\kappa(R)$ is stochastically at most $1+\Po(1)$
  where $\Po(\lambda)$ denotes a Poisson-distributed random variable with mean $\lambda$,
  that is
\begin{equation} \label{eqn.b-a-comps}
  \kappa(R) \leq_s 1+ \Po(1).
\end{equation}
  (Recall that $X \leq_s Y$ means that $\pr(X \leq t) \geq \pr(Y \leq t)$ for each $t$.)
  Note that from~(\ref{eqn.b-a-comps}) we have
\[   \pr(R \mbox{ is connected}) = \pr(\kappa(R) \leq 1) \geq \pr(\Po(1) \leq 0) = e^{-1}\]
  and we obtain~(\ref{eqn.b-a-conn}).
  Also $\E[\kappa(R)] \leq 2$ (see~(\ref{eqn.exp}) below).

  For any set $\ca$ of graphs, we let $\ca_n$ denote the set of graphs in $\ca$ on vertex set $[n]:=\{1,\ldots,n\}$;
  and we write $R_n \inu \ca$ to mean that $R_n$ is uniformly distributed over $\ca_n$.
  We always assume that $\ca_n$ is non-empty at least for large $n$.
  
  The class $\cf$ of forests is of course bridge-addable. For $R_n \in_u \cf$ a result of R\'enyi~\cite{renyi59} shows that
  $\pr(R_n \mbox{ is connected}) \to e^{-\frac12}$ as $n \to \infty$, and indeed
  $\kappa(R_n)$ converges in distribution to $1+ \Po(\frac12)$.
  For background on random trees and forests see the books~\cite{drmota2009,moon70}.
  It was noted in~\cite{msw06} that plausibly forests form the `least connected' bridge-addable set of graphs,
  and in particular it should be possible to improve the bound in~(\ref{eqn.b-a-conn}) asymptotically.
\begin{conjecture} \cite{msw06} \label{conj.b-add}
  If $\ca$ is bridge-addable and $R_n \in_u \ca$ then
\begin{equation} \label{eqn.b-a_conj}
  \liminf_{n \to \infty} \pr(R_n \mbox{ is connected}) \geq e^{-\frac12}.
\end{equation}
\end{conjecture}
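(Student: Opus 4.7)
The conjectured bound $e^{-1/2}$ matches the asymptotic connectivity probability for uniform random forests, and the forest case is believed to be extremal, so any proof strategy should identify where the injection underlying \eqref{eqn.b-a-comps} has slack away from the forest extreme. My plan would be to try to refine the standard injection $(G,e)\mapsto G+e$ that proves \eqref{eqn.b-a-comps}. Recall that for a graph $G\in\ca_n$ with $\kappa(G)=k$ components of sizes $n_1,\ldots,n_k$, the number of \emph{cross edges} (edges between distinct components) is $C(G)=\sum_{i<j}n_in_j\geq (k-1)(n-k/2)$, while the resulting graph $G+e$ has the new edge as a bridge; bounding the number of bridges in a graph with $k-1$ components by $n-k+1$ yields $|\ca_n^{(k)}|\cdot(k-1)(n-k/2)\leq |\ca_n^{(k-1)}|\cdot(n-k+1)$, where $\ca_n^{(k)}$ denotes graphs with exactly $k$ components. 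Iterating gives $\pr(\kappa=k)\leq\pr(\kappa=1)/(k-1)!$ and hence $e^{-1}$. To push to $e^{-1/2}$ one needs an extra factor of roughly $2^{k-1}/k$ of slack, summed geometrically.

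The first route I would try is an \emph{ordered-pair} injection: map $(G,(e_1,e_2))$, where $G$ has $k\geq 3$ components and $e_1,e_2$ are cross edges merging disjoint pairs of components, to $G+e_1+e_2$. The domain has size at least $|\ca_n^{(k)}|\cdot C(G)\cdot(C(G+e_1)-n_i n_j)$, which is asymptotically $|\ca_n^{(k)}|\cdot (k-1)(k-2)n^2$ in the extremal singletons-plus-giant regime. The image lies in $\ca_n^{(k-2)}$ with the two new edges being bridges; the key step would be to show that in the non-forest setting many ordered bridge-pairs in a $(k-2)$-component graph \emph{cannot} arise this way, saving a factor of $2$. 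A clean way to encode this saving would be to count pairs $(G',\{e_1,e_2\})$ of \emph{unordered} bridges in $G'$ that are mutually deletable-and-re-addable, exploiting some mild combinatorial structure of bridges that any bridge-addable class enjoys.

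A second route, which I would pursue in parallel, is a coupling or local-to-global argument. The idea is to extract from $R_n$ a random \emph{bridge-forest} $T(R_n)$ on the same component structure (for instance, a uniformly chosen spanning subforest of the bridge subgraph), and to show that $\kappa(R_n)\leq\kappa(T(R_n))$ together with a stochastic comparison between the law of $T(R_n)$ and the uniform forest on $[n]$. If $T(R_n)$ could be shown to be asymptotically at least as connected as a uniform random forest, R\'enyi's $e^{-1/2}$ result would transfer directly. A weaker version of this, controlling only the expected fragment size, already seems to be what the paper achieves (the ``less than $2$'' bound announced in the abstract), suggesting that the fragment/weighted-bridge framework developed in the paper is set up for exactly this kind of comparison.

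The main obstacle is precisely why the conjecture is still open: bridge-addability is a very weak closure property — $\ca$ need not be closed under edge deletion, edge addition within a component, or even removal of bridges — so any injection must be defined \emph{globally} on $\ca_n$ without the freedom to modify the within-component structure. This rules out most natural switching or compression arguments, and it is hard to rule out pathological classes that are tight for the $e^{-1}$ injection while being very far from the forest extremum. I would expect the genuinely hard step to be establishing that a non-trivial fraction of the ``doubly-addable'' bridge pairs in the image genuinely come from disconnected graphs, i.e.\ proving a structural lemma quantifying how much the bridge geometry of a general $G'\in\ca$ deviates from that of a forest.
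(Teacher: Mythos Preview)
The statement you were asked to prove is an \emph{open conjecture}, and the paper does not prove it. Immediately after stating Conjecture~\ref{conj.b-add} the paper writes: ``The full version of Conjecture~\ref{conj.b-add} (for a bridge-addable set) is still open.'' What the paper \emph{does} prove is the bridge-\emph{alterable} case (inequality~(\ref{eqn.b-alt}) and its weighted generalisation Theorem~\ref{prop.asympt-conn}), relying essentially on the ability to delete bridges as well as add them. So there is no ``paper's own proof'' to compare against, and your proposal is not a proof either---it is, as you yourself acknowledge, a sketch of two possible attack routes together with an honest statement of why the problem is hard.

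That said, it is worth noting how your two routes relate to what the paper actually does for the bridge-alterable case. Your second route---extract a bridge-forest and compare its component structure to a uniform random forest---is in spirit close to the paper's method: the paper partitions $\ca_n$ into equivalence classes $[G_0]$ of graphs sharing the same bridgeless part $\tilde{G}$, contracts each 2-edge-connected block to a single weighted vertex, and reduces the question to one about a random \emph{weighted} forest $R^{\cf}$ on $[n]$ (equation~(\ref{eqn.stocheq})), then invokes the key estimate~(\ref{claim.half}) from \cite{amr2011}. The crucial point is that this reduction requires $\ca$ to be closed under bridge deletion so that each equivalence class lies entirely inside $\ca_n$; without bridge-alterability the equivalence classes are truncated arbitrarily and the reduction collapses. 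Your first route---an ordered-pair injection on cross edges---does not appear in the paper and would be genuinely new, but the ``key step'' you identify (showing that many ordered bridge-pairs cannot arise from disconnected graphs) is exactly the unproven structural lemma, and you give no mechanism for establishing it under bridge-addability alone. In short: neither route as written closes the gap, and you correctly diagnose why.
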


  Balister, Bollob{\'a}s and Gerke~\cite{bbg07,bbg10} showed that inequality~(\ref{eqn.b-a_conj}) holds
  if we replace $e^{-\frac12} \approx 0.6065$ by the weaker bound $e^{-0.7983} \approx 0.4542$.
  This result has recently been improved by Norine~\cite{sn2012}. 
  Recently Addario-Berry, McDiarmid and Reed~\cite{amr2011}, and Kang and Panagiotou~\cite{kp2011},
  separately showed that~(\ref{eqn.b-a_conj}) holds with the desired lower
  bound $e^{-\frac12}$, if we suitably strengthen the condition on $\ca$. 
  Call a set $\ca$ of graphs {\em bridge-alterable} if it is bridge-addable and also closed under
  deleting bridges. Thus $\ca$ is bridge-alterable exactly when it satisfies the condition that,
  for each graph $G$ and bridge $e$ in $G$, the graph $G$ is in $\ca$ if and only if $G \setminus e$ is in $\ca$.
  Observe that each of the bridge-addable classes of graphs mentioned above is in fact bridge-alterable.
  If $\ca$ is a bridge-alterable set of graphs and $R_n \in_u \ca$ then~\cite{amr2011,kp2011} 
\begin{equation} \label{eqn.b-alt}
  \liminf_{n \to \infty} \pr(R_n \mbox{ is connected}) \geq e^{-\frac12}.
\end{equation}
  Since the class $\cf$ of forests is bridge-alterable, this result is best-possible for a bridge-alterable
  set of graphs.  The full version of Conjecture~\ref{conj.b-add} (for a bridge-addable set) is still open.

  Next let us consider the `fragment' of a graph $G$: 
  we let $\frag(G)$ be the number of vertices remaining when we remove a largest component.
  For the class $\cf$ of forests, if $R_n \in_u \cf$ then
\begin{equation} \label{eqn.forests-frag}  
  \E[\frag(R_n)] \to 1 \;\; \mbox{ as } n \to \infty.
\end{equation}
  It was shown in~\cite{cmcd09} that,
  if $\ca$ is a bridge-addable class of graphs which satisfies the further condition
  that it is closed under forming minors (and so $\ca$ is bridge-alterable), 
  then there is a constant $c=c(\ca)$ such that, for $R_n \in_u \ca$ 
\begin{equation} \label{eqn.b-alt-frag1}
  \E[\frag(R_n)] \leq c \;\; \mbox{ for all } n.
\end{equation}
  \smallskip
  
  \noindent
  {\em New results for uniform random graphs}
  
  In the present paper we much improve inequality~(\ref{eqn.b-alt-frag1}) and extend all the above results to more
  general distributions (similar to distributions considered in~\cite{cmcd-rwg2012}),
  though we continue to consider uniform random graphs in this section.
  (All the results presented here are special cases of results discussed in the following section.)
  
  In particular we see that, if $\ca$ is \emph{any} bridge-addable class of graphs
  (with no further conditions) and $R_n \in_u \ca$, then
\begin{equation} \label{eqn.b-add-frag}
  \E[\frag(R_n)] < 2 \;\; \mbox{ for all } n;
  \end{equation}
  and if $\ca$ is bridge-alterable then 
\begin{equation} \label{eqn.b-alt-frag2}
  \limsup_{n \to \infty} \E[\frag(R_n)] \leq 1.
\end{equation}
  Observe from the limiting result~(\ref{eqn.forests-frag}) that this last bound is optimal for a bridge-alterable
  set of graphs, but perhaps it holds for any bridge-addable set of graphs-- see Section~\ref{sec.concl}.

  We also strengthen inequality~(\ref{eqn.b-alt}) in much the same way that~(\ref{eqn.b-a-comps})
  strengthens~(\ref{eqn.b-a-conn}).
  Given non-negative integer-valued random variables $X_1,X_2,\ldots$ and $Y$, we say that
  $X_n$ is \emph{stochastically at most} $Y$ \emph{asymptotically}, and write $X_n \leq_s Y$ asymptotically,
  if for each fixed $t \geq 0$,
\[  \limsup_{n \to \infty} \pr(X_n \geq t) \leq \pr(Y \geq t). \]
 Our strengthening of~(\ref{eqn.b-alt}) is that, if $\ca$ is bridge-alterable and $R_n \inu \ca$, then 
\begin{equation} \label{eqn.b-alt-comps1}
  \kappa(R_n) \leq_s 1+ \Po(\frac{1}{2}) \;\; \mbox{ asymptotically }.
\end{equation}
  
  \medskip
  
\noindent
{\em Random rooted graphs}
  
  It may be enlightening to consider rooted graphs.
  We say that a graph is {\em rooted} if each component has a specified root vertex.
  We will use the notation $\Go$ for a rooted graph;
  and given a class $\ca$ of graphs we write $\cao$ for the corresponding class of rooted graphs.
  Thus a connected graph in $\ca_n$ yields $n$ rooted graphs in the corresponding set $\cao_n$;
  a graph in $\ca_n$ which has two components, with respectively $a$ and $n-a$ vertices, yields $a(n-a)$ rooted graphs
  in $\cao_n$; and so on.
  We use the notations $R^{o} \inu \cao$ and $R_n^{o} \inu \cao$ as before, to indicate that
  $R^{o}$ is sampled uniformly from $\cao$ (assumed finite) and $R_n^{o}$ is uniformly sampled from $\cao_n$.
  
  Now let $\ca$ be a finite bridge-addable set of graphs, and let $R^{o} \inu \cao$.
  Since a graph with several non-singleton components generates many rooted graphs,
  it is not immediately clear to what extent the earlier results on connectedness and components will survive.
  We will see that the analogues of~(\ref{eqn.b-a-conn}) and~(\ref{eqn.b-a-comps}) both hold:
\begin{equation} \label{eqn.rooted-conn}
  \pr(R^{o} \mbox{ is connected}) \geq e^{-1}
\end{equation}
  and indeed
\begin{equation} \label{eqn.rooted-comps}
  \kappa(R^{o}) \leq_s 1+ \Po(1).
\end{equation}
  Now consider the class $\cf$ of forests, and let $R_n^{o} \inu \cf^{o}$.
  Then as $n \to \infty$
\[ \pr(R_n^{o} \mbox{ is connected}) = (\frac{n}{n+1})^{n-1} \to e^{-1}\]
  and indeed $\kappa(R_n^{o})$ converges in distribution to $1+\Po(1)$.
  Thus~(\ref{eqn.rooted-conn}) and~(\ref{eqn.rooted-comps}) are best possible, in contrast to the unrooted case.
  Further, 
  $\E[\frag(R_n^{o})] \to \infty$ as $n \to \infty$,
  so there is no analogue for~(\ref{eqn.b-add-frag}) or~(\ref{eqn.b-alt-frag2}) for rooted graphs.
  \bigskip

  In all these results the crucial feature is the behaviour of the bridges.
  We shall bring this out by singling out bridges in the more general distributions we next
  introduce for our random graphs.


\section{Random weighted graphs} \label{sec.rwg}
 
  Given a graph $G$ with vertex set $V$, let $e(G)$ denote the number of edges,
  let $e_0(G)$ denote the number of bridges (edges in 0 cycles) and let
  $\tilde{G}$ denote the graph on $V$ obtained from $G$ by removing all bridges.
  Thus $\kappa(\tilde{G})= \kappa(G)+e_0(G)$.
  
  %
  Let $\lambda >0$ and $\nu >0$, and let $f(G) \geq 0$ for each bridge-free graph $G$.
  We call $(\lambda,\nu,f)$ a \emph{weighting} and define the \emph{weight} $\tau(G)$ of $G$
  by setting
\begin{equation} \label{eqn.probdef}
  \tau(G) = f(\tilde{G}) \, \lambda^{e_0(G)} \nu^{\kappa(G)}.
\end{equation}
  Given a set $\ca$ of graphs, let $\tau(\ca)$ denote $\sum_{G \in \ca} \tau(G)$.
  When $0<\tau(\ca) < \infty$, we let $R \in_{\tau} \ca$ mean that $R$ is a random graph sampled from $\ca$ with
  $\pr(R=G) = \tau(G)/ \tau(\ca)$ for each graph $G \in \ca$.  Similarly
  $R_n \in_{\tau} \ca$ means that $R_n$ is a random graph sampled from $\ca_n$ with
  $\pr(R=G) = \tau(G)/ \tau(\ca_n)$ for each graph $G \in \ca_n$ (and we assume that $\tau(\ca_n)>0$).
  In the special case when $\lambda=\nu=1$ and $f(G) \equiv 1$, clearly $R \in_{\tau} \ca$ and
  $R_n \in_{\tau} \ca$ mean the same as $R \inu \ca$ and $R_n \inu \ca$ respectively.
  When $f(G)\equiv \lambda^{e(G)}$ we have $\tau(G) = \lambda^{e(G)} \nu^{\kappa(G)}$,
  and we do not single out bridges.
  
  Recall that the classical Erd\H{o}s-R\'enyi (or binomial) random graph $G_{n,p}$ has vertex set $[n]$,
  and the ${n \choose 2}$ possible edges are included independently with probability $p$,
  where $0<p<1$. Assuming that $\ca_n$ is non-empty, for each $H \in \ca_n$ we have
  \[
    \pr(G_{n,p}=H | G_{n,p} \in \ca ) =
    \frac{p^{e(H)}(1-p)^{{n \choose 2}-e(H)}}
    {\sum_{G \in \ca_n}p^{e(G)}(1-p)^{{n \choose 2}-e(G)}}
  =
   \frac{\lambda^{e(H)}}
    {\sum_{G \in \ca_n}\lambda^{e(G)}}
\]
  where $\lambda=p/(1-p)$.
  %
  Now consider the more general random-cluster model (see for example~\cite{grimmett06}),
  where we are also given $\nu>0$ (we use $\nu$ rather than $q$), and
  the random graph $R_n$ takes as values the graphs $H$ on $[n]$, with
\[
  \pr(R_n=H) \propto p^{e(H)}(1-p)^{{n \choose 2}-e(H)} \nu^{\kappa(H)}.
\]
  Then for each $H \in \ca_n$
  \[
    \pr(R_n=H | R_n \in \ca )
  =
   \frac{\lambda^{e(H)}\nu^{\kappa(H)}}
    {\sum_{G \in \ca_n}\lambda^{e(G)} \nu^{\kappa(G)}} = \frac{\tau(H)}{\tau(\ca_n)}
\]
  where $\tau(H)= \lambda^{e(H)} \nu^{\kappa(H)}$, as we met above.
  \medskip
  
  
  Suppose now that we are given a set $\ca$ of graphs and a weighting $(\lambda,\nu,f)$,
  and that $0<\tau(\ca)<\infty$ or $\tau(\ca_n) >0$ as appropriate. 
  We generalise and sometimes amplify all the results presented in the last section.
  For the asymptotic results 
  we need to assume that $\ca$ is bridge-alterable rather than just bridge-addable.
  
  We first state two non-asymptotic results; then present some results on random forests,
  and consider asymptotic results; and finally we consider random rooted graphs.
  The first result generalises the inequalities~(\ref{eqn.b-a-conn}) and~(\ref{eqn.b-a-comps}), and 
  is used several times in~\cite{cmcd-rwg2012}; and the second result generalises inequality~(\ref{eqn.b-add-frag}).

\begin{theorem} \label{prop.tauconn}
  If $\ca$ is finite and bridge-addable and $R \in_{\tau} \ca$, then
\[ \kappa(R) \leq_s 1+ \Po(\nu/\lambda); \]
  and in particular $\; \pr(R \mbox{ is connected}) \geq e^{-\nu/\lambda}$, and
  $\E[\kappa(R)] \leq 1+ \nu/\lambda$.
\end{theorem}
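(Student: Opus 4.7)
The plan is to establish the single ratio inequality
\[
  (k-1)\, s_k \;\leq\; (\nu/\lambda)\, s_{k-1} \qquad \text{for all } k \geq 2,
\]
where $s_k := \sum_{G \in \ca,\, \kappa(G)=k} \tau(G)$, and then deduce the stochastic domination via the monotone likelihood-ratio principle. Indeed this is equivalent to $k \,\pr(\kappa(R)=k+1) \leq (\nu/\lambda)\,\pr(\kappa(R)=k)$ for $k \geq 1$, which is exactly the ratio for $1 + \Po(\nu/\lambda)$. Hence $\pr(\kappa(R)=k)/\pr(1+\Po(\nu/\lambda)=k)$ is nonincreasing in $k$, which by the standard likelihood-ratio implication forces $\kappa(R) \leq_s 1+ \Po(\nu/\lambda)$; the connectivity and expectation corollaries follow at once.

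To prove the ratio inequality I would partition $\ca$ according to the non-bridge skeleton: for each bridge-free graph $H$ with $m$ components, set $\ca_H := \{G \in \ca : \tilde G = H\}$. Every $G \in \ca_H$ is determined by the set $F(G)$ of bridges sitting on top of $H$; these form a forest between the components of $H$, and $\kappa(G) = m - |F(G)|$, $e_0(G) = |F(G)|$, and $\tau(G) = f(H) \lambda^{|F(G)|} \nu^{\kappa(G)}$ depend only on $|F(G)|$. Writing $N_j^H := |\{G \in \ca_H : |F(G)| = j\}|$, the per-class inequality $(k-1)\, s_k^H \leq (\nu/\lambda)\, s_{k-1}^H$ simplifies to the purely combinatorial
\[
 (k-1)\, N_{m-k}^H \;\leq\; N_{m-k+1}^H,
\]
and the global inequality then follows by summing over $H$.

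The combinatorial inequality is proved by double-counting pairs $(G,uv)$ with $G \in \ca_H$, $\kappa(G) = k$, and $uv$ an add-bridge of $G$. For each such $G$ with component sizes $s_1,\dots,s_k$ summing to $n$, one has $b(G) = \sum_{i<j} s_i s_j$ add-bridges; since each component of $G$ is a union of components of $H$ we know $s_i \geq 1$ and $\sum_i s_i = n \geq m$, and a routine optimisation of $\sum s_i^2$ (maximised when one $s_i$ is as large as possible) yields $b(G) \geq (k-1)(m-k+1)$ for $k \geq 2$. Conversely, each $G' \in \ca_H$ with $\kappa(G') = k-1$ has $e_0(G') = m-k+1$ bridges, so the preimage of $G'$ under bridge-addition has size at most $m-k+1$. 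The two estimates combine to $(k-1)(m-k+1)\, N_{m-k}^H \leq (m-k+1)\, N_{m-k+1}^H$, yielding the required combinatorial inequality after cancellation.

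The chief obstacle I anticipate is the absence of bridge-alterability: without it, removing an arbitrary bridge from $G' \in \ca$ need not keep us in $\ca$, so a direct comparison of $s_k$ with $s_{k-1}$ over all of $\ca$ would lose the equality of preimages. The partitioning by $\tilde G$ rescues the argument because inside $\ca_H$ only the inequality direction (preimage count $\leq$ number of bridges) is needed; the genuinely delicate point is then the lower bound on $b(G)$, where the trivial inequality $n \geq m$ (each component of $H$ contains at least one vertex) supplies just the slack required to close the double count.
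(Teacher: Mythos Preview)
Your argument is correct and is at heart the same double-counting proof the paper gives: establish the ratio inequality $k\,\pr(\kappa(R)=k+1)\le(\nu/\lambda)\,\pr(\kappa(R)=k)$ by comparing bridge-additions to bridge-deletions, then invoke the likelihood-ratio criterion (the paper's Lemma~\ref{lem.po2}) for stochastic domination by $1+\Po(\nu/\lambda)$.

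The one organisational difference is that your partition by the bridgeless skeleton $\tilde G=H$ is finer than necessary. The paper simply reduces to a fixed vertex set $\ca_n$ and runs the double count globally: with $e_0(G)\le n-k$ for $G\in\ca_n^{k}$ and $\cross(H)\ge k(n-k)$ for $H\in\ca_n^{k+1}$, the common factor $(n-k)$ cancels directly, with no need to pass through $m=\kappa(H)$ or to invoke $n\ge m$. Your worry in the last paragraph---that without bridge-alterability a global comparison might fail---is unfounded: the global count only needs the map $(H,e)\mapsto(H+e,e)$ from $\{(H,e):H\in\ca_n^{k+1},\,e\in\Cross(H)\}$ into $\{(G,e):G\in\ca_n^{k},\,e\in\Bridge(G)\}$ to be an injection, and bridge-addability alone guarantees this. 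So the skeleton partition buys nothing here, though it does no harm.
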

  %
%



\begin{theorem} \label{prop.fragbound}
  If $\ca$ is finite and bridge-addable and $R \in_{\tau} \ca$, then
\[ \E[\frag(R)] < \frac{2\nu}{\lambda}. \]
\end{theorem}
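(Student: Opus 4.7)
The plan is to combine a simple algebraic lemma on component sizes with a bridge-addition double count. Since adding a bridge preserves the vertex set, $\ca$ splits into bridge-addable subclasses $\ca^{V}$, one for each vertex set $V$ appearing in $\ca$; proving $\sum_{G\in\ca^{V}}\tau(G)\,\frag(G)<(2\nu/\lambda)\sum_{G\in\ca^{V}}\tau(G)$ for each $V$ then yields the result after summing over $V$ and dividing by $\tau(\ca)$. Fix such a $V$ with $|V|=n$.

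First I would establish the algebraic lemma: if $G$ has component sizes $a_{1}\geq a_{2}\geq\cdots\geq a_{k}$ with $\sum_{i}a_{i}=n$ and $\frag(G)=n-a_{1}$, then since every $a_{i}\leq a_{1}$ we have $\sum_{i}a_{i}^{2}\leq a_{1}\sum_{i}a_{i}=na_{1}$, and therefore
\[
2\sum_{1\leq i<j\leq k}a_{i}a_{j}\;=\;n^{2}-\sum_{i}a_{i}^{2}\;\geq\;n^{2}-na_{1}\;=\;n\,\frag(G).
\]
The left side is exactly the number of unordered pairs $\{u,v\}\subseteq V$ with $u,v$ in distinct components of $G$; call these the \emph{cross pairs} of $G$.

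Next comes the double count. For each cross pair $\{u,v\}$ of a graph $G\in\ca^{V}$, the edge $uv$ is a bridge in $G+uv\in\ca$; adding this edge leaves $\tilde{G}$ unchanged, increases $e_{0}$ by one and decreases $\kappa$ by one, so by the definition of $\tau$ in~(\ref{eqn.probdef}) we have $\tau(G+uv)=(\lambda/\nu)\tau(G)$. For any $G'\in\ca^{V}$, the preimages of $G'$ under the map $(G,\{u,v\})\mapsto G+uv$ correspond exactly to bridges $e$ of $G'$ with $G'-e\in\ca$, of which there are at most $e_{0}(G')$. Rearranging gives
\[
\sum_{G\in\ca^{V}}\tau(G)\cdot\#\{\text{cross pairs of }G\}\;\leq\;\frac{\nu}{\lambda}\sum_{G'\in\ca^{V}}\tau(G')\,e_{0}(G').
\]

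Finally I would invoke the trivial bound $e_{0}(G')\leq n-1$, which follows from $\kappa(\widetilde{G'})=\kappa(G')+e_{0}(G')\leq n$. Combined with the algebraic lemma this yields
\[
\frac{n}{2}\sum_{G\in\ca^{V}}\tau(G)\,\frag(G)\;\leq\;\frac{\nu}{\lambda}(n-1)\sum_{G\in\ca^{V}}\tau(G),
\]
so the weighted average of $\frag$ over $\ca^{V}$ is at most $2(n-1)\nu/(n\lambda)$, which is strictly less than $2\nu/\lambda$. Summing over the vertex sets $V$ then gives $\E[\frag(R)]<2\nu/\lambda$. I do not expect any real obstacle: the ingredients are elementary and the weight identity is the standard one for bridge-addable classes used throughout this setting. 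The only slightly nonroutine observation is the inequality $\sum_{i}a_{i}^{2}\leq na_{1}$, which is exactly what converts the easy cross-pair count into a clean lower bound on $n\,\frag(G)$.
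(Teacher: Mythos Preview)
Your proof is correct and follows essentially the same route as the paper: the paper proves $\cross(G)\ge\tfrac{n}{2}\,\frag(G)$ (Lemma~\ref{lem.basic3}) and then runs the identical bridge double-count together with $e_0\le n-1$ (Lemma~\ref{lem.frag1} with $\beta=\tfrac12$). One cosmetic slip: the left side of your display, $2\sum_{i<j}a_ia_j$, is \emph{twice} the number of unordered cross pairs, not the number itself---but since your final chain only uses $\cross(G)\ge\tfrac{n}{2}\,\frag(G)$, nothing is affected; and your derivation of that inequality via $\sum_i a_i^2\le a_1 n$ is in fact a bit cleaner than the paper's convexity argument on $\binom{a_i}{2}$.
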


  


  Before we introduce the asymptotic results for a general bridge-alterable set of graphs,
  let us record some results on random forests $R_n \in_{\tau} \cf$ which generalise the results
  mentioned earlier for uniform random forests $R_n \inu \cf$ -- 
  see for example~\cite{cmcd-rwg2012} where these results are proved in a general setting.
  Observe that $\tau(F)= f(\bar{K_n}) (\lambda/\nu)^{e(F)} \nu^{n}$ for each $F \in \cf_n$
  (where $\bar{K_n}$ denotes the graph on $[n]$ with no edges):
  thus $\tau(F) \propto (\lambda/\nu)^{e(F)}$,
  and the only aspect of the weighting that matters is the ratio $\lambda/\nu$.
\begin{theorem}\label{prop.asympt-forests}
  Consider $R_n \in_{\tau} \cf$, where $\cf$ is the class of forests.
  Then $\; \kappa(R_n)$ converges in distribution to $1+ \Po(\frac{\nu}{2\lambda})$, so
  $\pr(R_n \mbox{ is connected}) \to e^{-\frac{\nu}{2 \lambda}}$;
  $\E[\kappa(R_n)] \to 1 + \frac{\nu}{2 \lambda}$ as $n \to \infty$; and 
  $\E[ \frag(R_n)] \to \frac{\nu}{\lambda}$ as $n \to \infty$.
\end{theorem}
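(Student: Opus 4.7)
The plan is to use the reduction already noted in the paper: for $F\in\cf_n$ we have $\tau(F)\propto(\lambda/\nu)^{e(F)}$, and since $e(F)=n-\kappa(F)$ this is equivalently $\tau(F)\propto a^{\kappa(F)}$ with $a=\nu/\lambda$. Let $T(x)=\sum_{m\ge1}m^{m-1}x^m/m!$ and $U(x)=T(x)-\tfrac12 T(x)^2$ be the exponential generating functions of rooted and unrooted labeled trees respectively; then the weighted partition function is
\[
Z_n(a) \;:=\; \sum_{F\in\cf_n}a^{\kappa(F)} \;=\; n!\,[x^n]\exp(aU(x)),
\]
and the probability generating function of $\kappa(R_n)$ equals $Z_n(ay)/Z_n(a)$. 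Near the dominant singularity $x=1/e$ the classical expansion $T(x)=1-\sqrt{2}(1-ex)^{1/2}+O(1-ex)$ gives $U(x)=\tfrac12-(1-ex)+\tfrac{2\sqrt 2}{3}(1-ex)^{3/2}+O((1-ex)^2)$, so the leading non-analytic part of $\exp(aU(x))$ is a constant multiple of $a\,e^{a/2}(1-ex)^{3/2}$. Standard transfer theorems then yield $[x^n]\exp(aU(x))\sim C\,a\,e^{a/2}\,e^n n^{-5/2}$ for an absolute constant $C>0$.

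Consequently $\E[y^{\kappa(R_n)}]=Z_n(ay)/Z_n(a)\to (ay\,e^{ay/2})/(a\,e^{a/2})=y\,e^{a(y-1)/2}$, which is the PGF of $1+\Po(a/2)$. This proves $\kappa(R_n)$ converges in distribution to $1+\Po(\nu/(2\lambda))$, and in particular $\pr(R_n\text{ is connected})\to e^{-\nu/(2\lambda)}$. The uniform stochastic bound $\kappa(R_n)-1\leq_s\Po(\nu/\lambda)$ from Theorem~\ref{prop.tauconn} supplies the uniform integrability needed to conclude $\E[\kappa(R_n)]\to1+\nu/(2\lambda)$. The same EGF calculation handles the expected number of components of each fixed size $k$:
\[
\E[N_k(R_n)] \;=\; a\binom{n}{k}k^{k-2}\,\frac{Z_{n-k}(a)}{Z_n(a)} \;\longrightarrow\; \frac{a\,k^{k-2}}{k!\,e^k}.
\]

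For the fragment, write $\frag(R_n)=\sum_{k\ge1}k\,N_k^{\star}(R_n)$ where $N_k^{\star}$ counts non-largest components of size $k$. Standard arguments show the largest tree has size tending to infinity in probability, so that for each fixed $K$, $\E[\sum_{k\le K}k\,N_k^{\star}(R_n)]\to a\sum_{k\le K}k^{k-1}/(k!\,e^k)$, which as $K\to\infty$ tends to $a\,T(1/e)=a=\nu/\lambda$, giving the lower bound $\liminf_n\E[\frag(R_n)]\ge\nu/\lambda$. The matching upper bound is the main obstacle: one needs $\E[\sum_{k>K}k\,N_k^{\star}(R_n)]\to0$ as $K\to\infty$ uniformly in $n$, i.e.\ to rule out forests in which mass is spread across two moderately large components. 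I would handle this by a direct EGF estimate showing that the weight of forests possessing two components each of size at least $K$ is of order $K^{-1/2}$ relative to $Z_n(a)$, uniformly in $n$, with the uniform first-moment bound $\E[\frag(R_n)]<2\nu/\lambda$ of Theorem~\ref{prop.fragbound} ensuring no runaway mass at the top of the sum.
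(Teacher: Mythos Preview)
The paper does not actually prove Theorem~\ref{prop.asympt-forests}: it is recorded as a known fact with the remark ``see for example~\cite{cmcd-rwg2012} where these results are proved in a general setting.'' So there is no in-paper proof to compare against, and I will simply assess your argument.

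Your treatment of the distributional limit and of $\E[\kappa(R_n)]$ is correct and is the standard analytic-combinatorics route. The reduction $\tau(F)\propto a^{\kappa(F)}$ with $a=\nu/\lambda$, the identity $Z_n(a)=n!\,[x^n]\exp(aU(x))$, the singular expansion $U(x)=\tfrac12-(1-ex)+\tfrac{2\sqrt2}{3}(1-ex)^{3/2}+O((1-ex)^2)$, and the transfer to $[x^n]\exp(aU(x))\sim C\,a\,e^{a/2}e^n n^{-5/2}$ are all right; the ratio $Z_n(ay)/Z_n(a)\to y\,e^{a(y-1)/2}$ is indeed the PGF of $1+\Po(a/2)$, and the stochastic domination from Theorem~\ref{prop.tauconn} gives the uniform integrability needed for $\E[\kappa(R_n)]\to 1+a/2$.

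For $\E[\frag(R_n)]$, your lower bound via $\E[N_k(R_n)]\to a\,k^{k-2}/(k!\,e^k)$ and $\sum_k k\cdot a\,k^{k-2}/(k!\,e^k)=a\,T(1/e)=a$ is fine; the event ``largest component $\le K$'' forces $\kappa(R_n)\ge n/K$, which by Theorem~\ref{prop.tauconn} has probability $o(1/n)$, so the switch from $N_k$ to $N_k^{\star}$ costs nothing. For the upper bound you do not need to carry out a separate estimate: $\cf$ is bridge-alterable, so Theorem~\ref{prop.fragbound2} applies directly and gives $\limsup_n \E[\frag(R_n)]\le \nu/\lambda$, with no circularity (the proof of Theorem~\ref{prop.fragbound2} in Section~\ref{sec.fragproof} uses only Theorem~\ref{prop.tauconn} and tree-counting, not Theorem~\ref{prop.asympt-forests}). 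In fact, if you look at that proof specialised to $G_0=\bar K_n$ (all $w_i=1$), it is exactly the ``two moderately large pieces'' estimate you sketch: it bounds the $\ttau$-weight of forests splitting into parts of sizes $\ge z$ on both sides by $O(z^{-1}n^{1/2})$ times $\ttau(\ct_n)$, yielding $\E[\frag(R_n)\,{\bf 1}_{\frag>\eps n}]=O(n^{-1/2})$. So your plan is right, but you can simply invoke Theorem~\ref{prop.fragbound2} rather than redo the work.
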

  \noindent
  Now we consider asymptotic results for a bridge-alterable set of graphs.
  These results generalise and amplify inequalities~(\ref{eqn.b-alt}) and~(\ref{eqn.b-alt-frag2});
  and Theorem~\ref{prop.asympt-forests} shows that each of inequalities~(\ref{eqn.asympt-comps})
  to~(\ref{eqn.asympt-frag}) is best-possible for a bridge-alterable class of graphs.
  %
\begin{theorem}\label{prop.asympt-conn}
  Suppose that $\ca$ is bridge-alterable 
  and  $R_n \intau \ca$.  Then
\begin{equation} \label{eqn.asympt-comps}
  \kappa(R_n) \leq_s 1+ \Po(\frac{\nu}{2\lambda}) \;\; \mbox{ asymptotically},
\end{equation}
  and so in particular
\begin{equation} \label{eqn.asympt-conn}
     \liminf_{n \to \infty} \pr(R_n \mbox{ is connected}) \geq e^{-\frac{\nu}{2\lambda}};
\end{equation}
  and
\begin{equation} \label{eqn.asympt-Ekappa}
 \limsup_{n \to \infty} \E[\kappa(R_n)] \leq 1+ \frac{\nu}{2\lambda}.
\end{equation}
\end{theorem}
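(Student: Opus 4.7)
The plan is to adapt the strategy of Addario-Berry, McDiarmid and Reed~\cite{amr2011} and Kang and Panagiotou~\cite{kp2011} (for the uniform bridge-alterable case) to the weighted setting. The first step is to use bridge-alterability to represent any $G\in\ca$ by the pair $(\tilde G, B)$, where $\tilde G$ is the bridge-free subgraph (which lies in $\ca$) and $B$ is the set of bridges, which forms a spanning forest on the ``super-vertices'' (components of $\tilde G$). Bridge-alterability ensures that, conversely, every such pair $(H, B)$ with $H\in\ca$ bridge-free and $B$ a super-forest yields a graph in $\ca$. The weight factors accordingly: conditional on $\tilde R_n = H$ with $H$ having components of sizes $n_1,\dots,n_k$, the law of $B$ is proportional to $\mu^{|B|}$, where $\mu=\lambda/\nu$, with each super-edge $\{a,b\}$ of $B$ also carrying a factor $n_a n_b$ from its realisations as an actual edge.

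Next I would show that, with high probability, $\tilde R_n$ has a single dominant component of size $n-o(n)$. Conditional on such a giant super-vertex of size $n_1\approx n$ and residual super-vertices of sizes $n_2,\dots,n_k$ with $s:=\sum_{j\geq 2}n_j = o(n)$, a direct weight computation using Pitman's generalised Cayley-type formula shows that the dominant super-forest configurations are those in which each small super-vertex either attaches directly to the giant or stays isolated; interconnections among small super-vertices contribute only an $O(s^2/n)$ fraction of the weight. The marginal distribution of $\kappa(R_n)-1$ then matches, up to vanishing error, the number of non-giant tree components of a weighted random forest on the residual vertex set of size $s$. Feeding this into Theorem~\ref{prop.asympt-forests} gives the $\Po(\nu/(2\lambda))$ stochastic dominance~(\ref{eqn.asympt-comps}); inequality~(\ref{eqn.asympt-conn}) is then immediate, and~(\ref{eqn.asympt-Ekappa}) follows from~(\ref{eqn.asympt-comps}) combined with the non-asymptotic bound of Theorem~\ref{prop.tauconn}, which provides the uniform integrability needed to exchange $\limsup$ and expectation.

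The main obstacle will be the giant-core statement. In the uniform case, this is handled by a counting injection: given $G\in\ca_n$ whose bridge-free core $\tilde G$ lacks a giant component, one deletes some bridges of $G$ and re-attaches them so as to merge small core components, producing many distinct graphs in $\ca_n$ with dominant cores (and remaining in $\ca$ by bridge-alterability). In the weighted setting this injection must preserve, or controllably alter, the weight $\tau(G)=f(\tilde G)\,\lambda^{e_0(G)}\,\nu^{\kappa(G)}$. Since $\tilde G$ is typically unchanged under such bridge-shuffling while $e_0(G)$ and $\kappa(G)$ may vary in a controlled way, this amounts to careful bookkeeping with the exponents of $\lambda$ and $\nu$; no essentially new idea beyond those of the uniform case is needed, but the argument must be executed with care to track all the weight contributions.
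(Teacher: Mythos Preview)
Your opening reduction is right and matches the paper: condition on $\tilde R_n = G_0$ for a bridgeless $G_0$, and then the law of the bridges is that of a random forest on the super-vertices (the components of $G_0$) with vertex weights $w_i = |V(C_i)|$ and edge weight $\lambda/\nu$. The paper encodes this as $\kappa(R^{G_0}) =_s \kappa(R^{\cf})$, where $R^{\cf}$ is a forest on $[n]$ with $\Pr(R^{\cf}=F) \propto \prod_i w_i^{d_F(i)} \lambda^{e(F)}\nu^{\kappa(F)}$.

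The gap is in your second step. The assertion that ``with high probability, $\tilde R_n$ has a single dominant component of size $n-o(n)$'' is simply false for general bridge-alterable $\ca$. Take $\ca=\cf$: then $\tilde R_n$ is always the empty graph on $[n]$, so every super-vertex is a singleton and there is no giant core. More generally, for any class where most edges are bridges, the core fragments completely. Your downstream argument --- treating the non-giant super-vertices as a residual set of size $s=o(n)$ and applying Theorem~\ref{prop.asympt-forests} to them --- collapses when $s=n$. The injection you sketch in the last paragraph cannot repair this: bridge-shuffling leaves $\tilde G$ unchanged, so it cannot merge core components.

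The paper avoids this trap entirely. It does \emph{not} attempt to control the structure of $\tilde R_n$; instead it proves the Poisson bound \emph{conditionally on every possible} $G_0$, with no assumption on the weight profile $(w_1,\dots,w_n)$. The heart of the argument is the estimate $\sum_k \E[c(R^{\ct},k)]/(k(W-k)) \leq (1+\eta)/2$ from~\cite{amr2011}, where $R^{\ct}$ is $R^{\cf}$ conditioned to be a tree and $c(T,k)$ counts edges whose removal leaves a component of weight $k$. This yields $\Pr(\kappa(R^{\cf})=2) \leq (1+\eta)\frac{\nu}{2\lambda}\Pr(\kappa(R^{\cf})=1)$ for all weight vectors with $W$ large, and a multiplicativity lemma bootstraps to general $k$. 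No giant is needed; the weighted tree formula of Moon--R\'enyi--Pitman does the work uniformly in $w$.
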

  %
\begin{theorem} \label{prop.fragbound2}
  If $\ca$ is bridge-alterable 
  and $R_n \in_{\tau} \ca$, then
\begin{equation} \label{eqn.asympt-frag}
  \limsup_{n \to \infty} \E[\frag(R_n)] \leq \frac{\nu}{\lambda}.
\end{equation}
\end{theorem}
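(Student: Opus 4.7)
The plan is to condition on the bridge-free subgraph $\tilde R_n$ of $R_n$ and reduce, via bridge-alterability, to the analysis of a weighted random bridge-forest on the components of $\tilde R_n$. Since $\ca$ is bridge-alterable, iterated bridge-deletion gives $\tilde R_n\in\ca_n$ almost surely, while iterated bridge-addability implies that every graph obtained from a bridge-free $H\in\ca_n$ (with components $C_1,\ldots,C_k$ of sizes $n_1,\ldots,n_k$) by adding bridges between distinct components again lies in $\ca_n$. Hence the conditional law of $R_n$ given $\tilde R_n=H$ depends only on $(n_1,\ldots,n_k)$: it picks a sub-forest $T$ of $K_k$ with probability proportional to $\lambda^{|E(T)|}\nu^{k-|E(T)|}\prod_{(i,j)\in E(T)} n_i n_j$, together with a uniformly chosen pair of endpoints in $C_i\times C_j$ for each edge of $T$.

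Under this conditioning the components of $R_n$ are the unions $\bigcup_{i\in V(T_s)} C_i$ over the trees $T_s$ of $T$, so $\frag(R_n)=n-\max_s\sum_{i\in V(T_s)} n_i$. The bridge-forest measure is itself a weighted random graph in a bridge-addable system (essentially forests on $[k]$ with super-vertex multiplicities $n_i$), so Theorem~\ref{prop.fragbound} applied there immediately gives the uniform conditional bound $\E[\frag(R_n)\mid\tilde R_n]<2\nu/\lambda$, and hence uniform integrability of the family $\{\frag(R_n)/n\}_n$. To halve this constant asymptotically I would use Theorem~\ref{prop.asympt-conn}: since $\kappa(R_n)$ is precisely the number of trees of $T$, the stochastic bound $\kappa(R_n)\le_s 1+\Po(\nu/(2\lambda))$ asymptotically shows that only $O(1)$ non-largest trees survive in the limit. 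A Cayley--Borchardt style analysis of the weighted-forest measure $\propto\prod n_i n_j$, patterned on Theorem~\ref{prop.asympt-forests}, then identifies the limiting expected total size of the non-largest trees as $\nu/\lambda$.

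The step I expect to be the main obstacle is the emergence-of-giant at the $\tilde R_n$ level: to exploit the weighted-forest asymptotics cleanly one needs that a single $n_i$ contains all but $O(1)$ vertices with high probability. Without a minor-closure hypothesis as in~\cite{cmcd09} this is not automatic, so the natural route is a bootstrap combining the non-asymptotic bound of Theorem~\ref{prop.fragbound} applied to $R_n$ itself with the tightness of $\kappa(R_n)$ from Theorem~\ref{prop.asympt-conn}: any substantial second component of $\tilde R_n$ would either inflate $\kappa(R_n)$ or inflate $\frag(R_n)$ beyond what those two theorems permit in the limit. Once a dominant component of $\tilde R_n$ is secured, the two-stage argument above closes the loop and yields $\limsup_n\E[\frag(R_n)]\le\nu/\lambda$.
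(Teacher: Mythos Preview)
Your reduction to a weighted random forest on the components of $\tilde R_n$ is exactly right and matches the paper. The gap is in what you do next.

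The obstacle you flag --- that one needs a single $n_i$ containing all but $O(1)$ vertices --- is a misdiagnosis, and your bootstrap cannot supply it. Take $\ca=\cf$: then $\tilde R_n$ consists of $n$ isolated vertices and \emph{every} $n_i=1$, so there is never a dominant super-vertex, yet the theorem holds. More generally, small $\kappa(R_n)$ and small $\frag(R_n)$ say nothing about the component structure of $\tilde R_n$: a single tree has $\kappa=1$ and $\frag=0$ but its bridge-free subgraph has $n$ components. So the bootstrap you sketch (``a substantial second component of $\tilde R_n$ would inflate $\kappa$ or $\frag$'') simply does not fire. What is actually needed is a weighted-forest estimate that is \emph{uniform over all weight vectors} $(n_1,\ldots,n_k)$ with $\sum n_i=n$, and you have not provided one; the appeal to Theorem~\ref{prop.asympt-forests} does not help because that result is for unit weights.

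The paper sidesteps this entirely with a different decomposition. Fix $\eps>0$ and split $\E[\frag(R_n)]$ according to whether $\frag(R_n)\le\eps n$ or not. On the event $\{\frag(R_n)\le\eps n\}$ one has the sharpened cross-edge bound $\cross(G)\ge(1-\eps)n\cdot\frag(G)$, so Lemma~\ref{lem.frag1} with $\beta=1-\eps$ gives $\E[\frag(R_n)\,{\bf 1}_{\frag\le\eps n}]<(1-\eps)^{-1}\nu/\lambda$ directly --- no forest asymptotics needed for the main term. For the tail $\{\frag(R_n)>\eps n\}$, the paper passes to the weighted forest $R^{\cf}$ as you do, and then uses the exact enumeration $\ttau(\ct_A)=\alpha\lambda^{|A|}\prod_{i\in A}w_i\cdot W_A^{|A|-2}$ together with Stirling to show, uniformly in the weight vector, that $\E[\wfrag(R^{\cf})\,{\bf 1}_{\wfrag>\eps W}]=O(W^{-1/2})$. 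Letting $\eps\to0$ finishes. The key idea you are missing is that the factor of $2$ in Theorem~\ref{prop.fragbound} comes from the crude bound $\cross(G)\ge(n/2)\frag(G)$, and once $\frag$ is small relative to $n$ this improves to $\cross(G)\ge(1-o(1))n\cdot\frag(G)$.
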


\medskip

  Now consider rooted graphs, starting with rooted forests.  Recall that $\cf^{o}$ denotes the class of rooted forests.
  
\begin{theorem}\label{prop.rooted-forests}
  Consider $R^{o}_n \in_{\tau} \cf^{o}$. 
  As $n \to \infty$, 
  $\kappa(R^{o}_n)$ converges in distribution to $1+\Po(\frac{\nu}{\lambda})$;
  and so
  $\pr(R_n^{o} \mbox{ is connected}) \to e^{-\frac{\nu}{\lambda}}$, and
  $\E[\kappa(R_n^{o})] \to 1 + \frac{\nu}{\lambda}$ as $n \to \infty$.
  In contrast, $\E[ \frag(R_n^{o})] \to \infty$ as $n \to \infty$.
\end{theorem}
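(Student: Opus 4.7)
The plan is to reduce the theorem to simple combinatorial identities for the number $N^o(n,k)$ of rooted forests on $[n]$ with $k$ components. Since every edge of a forest is a bridge, for each $F^o \in \cf^o_n$ we have $\tau(F^o) = f(\bar{K_n})\,\lambda^{n-\kappa(F^o)}\nu^{\kappa(F^o)}$, so up to a factor independent of $F^o$ the weight is just $\rho^{\kappa(F^o)}$ with $\rho := \nu/\lambda$. I would then invoke the classical formula $N^o(n,k) = \binom{n-1}{k-1} n^{n-k}$ (from Lagrange inversion applied to the rooted tree function $T = xe^T$, or from Cayley/Pitman), after which the partition function collapses by the binomial theorem to
\[ Z_n \;=\; \sum_{k=1}^n \rho^k \binom{n-1}{k-1} n^{n-k} \;=\; \rho(n+\rho)^{n-1}. \]

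From this closed form the first two sentences of the theorem follow by direct algebra. The exact PMF is $\pr(\kappa(R_n^o) = k+1) = \binom{n-1}{k}\rho^k n^{n-1-k}/(n+\rho)^{n-1}$, and for fixed $k$ the estimates $\binom{n-1}{k} n^{n-1-k} \sim n^{n-1}/k!$ together with $(n+\rho)^{n-1} \sim e^\rho n^{n-1}$ yield the limit $e^{-\rho}\rho^k/k!$, giving convergence in distribution to $1+\Po(\rho)$; the connectedness limit is the $k=0$ case. Differentiating $Z_n$ in $\rho$ (equivalently, $\sum_j j\binom{n-1}{j}\rho^j n^{n-1-j} = \rho(n-1)(n+\rho)^{n-2}$) then gives the exact mean $\E[\kappa(R_n^o)] = 1 + \rho(n-1)/(n+\rho)$, which tends to $1+\rho$.

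The divergence $\E[\frag(R_n^o)] \to \infty$ is the main delicate point, and is where the rooted case genuinely parts company from the unrooted case. I would bound $\E[\frag(R_n^o)] \ge \pr(\kappa(R_n^o) = 2)\,\E[\frag(R_n^o) \mid \kappa(R_n^o) = 2]$; the first factor tends to $\rho e^{-\rho} > 0$ by the previous step. Conditional on $\kappa=2$ the unordered size pair is $\{a, n-a\}$ with probability $\binom{n}{a} a^{a-1}(n-a)^{n-a-1}/N^o(n,2)$ for $a < n-a$, and $\frag = a$ in that event. Substituting $\binom{n}{a} \sim n^a/a!$, $(n-a)^{n-a-1} \sim e^{-a} n^{n-a-1}$ and $N^o(n,2) = (n-1)n^{n-2}$, one obtains for each fixed $a \ge 1$ the pointwise limit $\pr(\frag(R_n^o) = a \mid \kappa = 2) \to a^{a-1}e^{-a}/a!$. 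Hence for every $M$, $\liminf_n \E[\frag(R_n^o) \mid \kappa = 2] \ge \sum_{a=1}^M a^a e^{-a}/a!$, and by Stirling $a^a e^{-a}/a! \sim (2\pi a)^{-1/2}$, so the infinite series diverges; letting $M \to \infty$ finishes the proof. The only analytic content beyond bookkeeping is this Fatou-style divergence, which reflects the square-root singularity of $T$ at $1/e$.
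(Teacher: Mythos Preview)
Your proof is correct, but it proceeds along a different route from the paper's.

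For the distributional statements, the paper does not use the closed form $N^o(n,k)=\binom{n-1}{k-1}n^{n-k}$ at all. Instead it observes that the two inequalities in the proof of Theorem~\ref{prop.rooted} become equalities when $\ca=\cf$, yielding the exact recurrence
\[
  \pr(\kappa(R^o_n)=k+1)=\frac{n-k}{n}\,\frac{\nu}{\lambda k}\,\pr(\kappa(R^o_n)=k),
\]
from which convergence to $1+\Po(\nu/\lambda)$ follows. Your binomial-theorem computation of $Z_n=\rho(n+\rho)^{n-1}$ and the exact mean $1+\rho(n-1)/(n+\rho)$ is more explicit and self-contained, and in particular it gives the convergence of $\E[\kappa(R^o_n)]$ directly rather than via stochastic domination.

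For $\E[\frag(R^o_n)]\to\infty$, both arguments restrict attention to two-component forests. The paper writes down the contribution $\tau(\cf_n^o)^{-1}\sum_j \binom{n}{j} j\,\tau(\ct_j^o)\tau(\ct_{n-j}^o)$ over a range $\log n<j<n/2$ and evaluates it with Stirling's formula as $\Theta(n^{1/2})$, so it obtains a quantitative rate. Your approach fixes $a$, passes to the pointwise limit $\pr(\frag=a\mid\kappa=2)\to a^{a-1}e^{-a}/a!$, and then uses a Fatou-type truncation together with the divergence of $\sum a^a e^{-a}/a!\sim\sum(2\pi a)^{-1/2}$. This is perfectly valid but yields only divergence, not the $\Theta(n^{1/2})$ order; on the other hand it makes transparent the connection with the Borel distribution and the square-root singularity of the tree function.
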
  
  
    Our final result here is non-asymptotic and may be compared with
  Theorem~\ref{prop.tauconn}.
  It generalises~(\ref{eqn.rooted-conn}) and~(\ref{eqn.rooted-comps}).
  Theorem~\ref{prop.rooted-forests} on rooted forests shows that it is best possible,
  and that there is no rooted-graph analogue for Theorem~\ref{prop.fragbound} (which bounds $\E[ \frag(R_n)]$).

\begin{theorem} \label{prop.rooted}
  Let $\ca$ be finite and bridge-addable, and let $R^{o} \in_{\tau} \cao$.
  Then
\[ \kappa(R^{o}) \leq_s 1+ \Po(\nu/\lambda); \]
  and in particular $\; \pr(R^{o} \mbox{ is connected}) \geq e^{-\nu/\lambda}$, and
  $\E[\kappa(R^{o})] \leq 1+ \nu/\lambda$.
\end{theorem}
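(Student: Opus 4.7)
The plan is to mimic the bipartite-counting proof of Theorem~\ref{prop.tauconn}, adapted to rooted graphs, and to deduce the stochastic bound from a likelihood-ratio inequality. Let $a^o_k$ denote the total $\tau$-weight of rooted graphs in $\cao$ with exactly $k$ components, using the convention $\tau(G^o)=\tau(G)$ for the underlying unrooted graph $G$. The goal is to prove $k\,a^o_{k+1}\leq(\nu/\lambda)\,a^o_k$ for every $k\geq 1$; since $1+\Po(\nu/\lambda)$ has probability ratios $p_k/p_{k-1}=(\nu/\lambda)/k$, this is the likelihood-ratio condition that forces $\kappa(R^o)\leq_s 1+\Po(\nu/\lambda)$, from which the connectedness bound and the bound on $\E[\kappa(R^o)]$ follow.

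First I would introduce the operation. From a rooted graph $G^o$ with $k+1$ components $C_1,\ldots,C_{k+1}$, sizes $a_1,\ldots,a_{k+1}$, and roots $r_1,\ldots,r_{k+1}$, choose an ordered pair $(i,j)$ with $i\neq j$ and a vertex $v\in C_j$, and produce a new rooted graph $H^o$ by adding the bridge $r_iv$ and declaring $r_i$ to be the root of the merged component (so the former root $r_j$ becomes a non-root vertex of it). Bridge-addability of $\ca$ keeps $H^o$ in $\cao$, now with $k$ components. Because adding a bridge leaves $\tilde G$ unchanged, raises $e_0(G)$ by one, and lowers $\kappa(G)$ by one, we have $\tau(H^o)=\tau(G^o)\lambda/\nu$. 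The number of operations available at $G^o$ is $\sum_{i\neq j}a_j=kn$, so summing over all such $G^o$ the total outgoing weight is $kn\,(\lambda/\nu)\,a^o_{k+1}$.

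Next I would reverse the count. A pair $(G^o,(i,j,v))$ produces a given $H^o$ with $k$ components if and only if the added edge is a bridge $e$ of $H^o$ incident to the root $r'$ of the component containing $e$; removing $e$ splits that component into a piece $P$ containing $r'$ and a piece $Q$ not containing it, and the only remaining freedom is the choice of $r_j\in Q$, giving $|Q|$ preimages per such bridge. The geometric crux is that within a single component of $H^o$ the pieces $Q$ arising from different bridges incident to $r'$ are pairwise disjoint: any simple path from a point in $Q\cap Q'$ to $r'$ would have to cross two distinct edges incident to $r'$, visiting $r'$ twice, which is impossible. Hence the sum of $|Q|$ over bridges at $r'$ in a component of size $s$ is at most $s-1$, and summing over the $k$ components gives total in-weight at most $(n-k)\,a^o_k$. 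Equating the two counts yields $kn\,(\lambda/\nu)\,a^o_{k+1}\leq(n-k)\,a^o_k\leq n\,a^o_k$, i.e.\ $k\,a^o_{k+1}\leq(\nu/\lambda)\,a^o_k$.

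The delicate point is the choice of operation: letting the new root be any vertex of the merged component inflates both sides uncontrollably, while discarding all root information destroys reverse injectivity. Keeping $r_i$ as the surviving root while letting $r_j$ vary freely over $Q$ is the balance that makes the disjointness argument deliver exactly the $n-k$ bound needed to match the forward count and produce the factor $(\nu/\lambda)/k$.
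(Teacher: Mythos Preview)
Your proof is correct and follows essentially the same double-counting strategy as the paper, arriving at the same inequality $\tau(\ca_n^{k+1\,o}) \leq \frac{n-k}{n}\frac{\nu}{\lambda k}\,\tau(\ca_n^{k\,o})$ and then invoking the Poisson likelihood-ratio lemma. The one difference is the convention for which root survives the merge: the paper un-roots the endpoint $u$ (your $r_i$) rather than $r_j$, which makes the correspondence an exact bijection between pairs $(H^o,uv)$ on the $(k{+}1)$-side and pairs $(G^o,e)$ with $e$ an \emph{arbitrary} bridge on the $k$-side, so the bound $n-k$ comes straight from $e_0(G)\le n-\kappa(G)$ and your disjointness argument for the pieces $Q$ is not needed.
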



\section{Proofs for non-asymptotic results}  
\label{sec.proof-non-a}

  In this section we prove the non-asymptotic results above, namely Theorems~\ref{prop.tauconn},
  \ref{prop.fragbound}, except that we leave proofs for rooted graphs to Section~\ref{sec.rooted}. 
  
  Given a graph $G$, let $\Bridge(G)$ denote the set of bridges, so that $|\Bridge(G)|=e_0(G)$;
  and let $\Cross(G)$ denote the set of `non-edges' or `possible edges' between components, and let
  $\cross(G)=|\Cross(G)|$.  
  We start with two basic lemmas about graphs.
  The first is just an observation, and needs no proof.
  
\begin{lemma} \label{lem.basic1}
  Let the set $\ca$ of graphs be bridge-addable. 
  If $G \in \ca$ and $e \in \Cross(G)$,
  then the graph $G' = G +e$ obtained from $G$ by adding $e$ is in $\ca$ and $e$ is a bridge of $G'$;
  and $\tau(G) = \tau(G') \cdot (\nu/\lambda)$.
\end{lemma}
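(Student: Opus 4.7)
The plan is simply to unwind the definitions of bridge-addability and of the weight $\tau$. The lemma has three assertions and each is immediate from the setup, so the task here is careful bookkeeping rather than the extraction of any combinatorial content; indeed the author flags it as an observation.

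First I would note that $e \in \Cross(G)$ means by definition that the endpoints $u,v$ of $e$ lie in distinct components of $G$, so the bridge-addable hypothesis gives $G' := G+e \in \ca$ directly. The edge $e$ is a bridge of $G'$ because $G' \setminus e = G$ has one more component than $G'$: adding $e$ merges exactly the two components containing $u$ and $v$ and touches nothing else. Hence $\kappa(G') = \kappa(G)-1$ and $e_0(G') = e_0(G)+1$.

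For the weight identity I would compare the two expressions coming from~(\ref{eqn.probdef}). The key point is that deleting all bridges of $G'$ yields the same graph as deleting all bridges of $G$: the only new bridge in $G'$ is $e$ itself, and no previously non-bridge edge of $G$ can become a bridge in $G'$ since $G'$ has strictly fewer components than $G$ on the same vertex set. Thus $\tilde{G'} = \tilde{G}$ and so $f(\tilde{G'}) = f(\tilde{G})$. Combining with the two count changes above gives $\tau(G') = f(\tilde{G})\,\lambda^{e_0(G)+1}\,\nu^{\kappa(G)-1} = (\lambda/\nu)\,\tau(G)$, which rearranges to $\tau(G) = \tau(G')\cdot(\nu/\lambda)$.

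The only place where one could slip is in the verification that $\tilde{G'} = \tilde{G}$, and more specifically in ruling out that some edge of $G$ which was not a bridge suddenly becomes one in $G'$; but this is clear because passing from $G$ to $G'$ strictly decreases the number of components without changing any cycle structure inside the existing components. Once this is checked the three claims follow with no further work.
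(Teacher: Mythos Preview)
Your proposal is correct. The paper itself gives no proof of this lemma, remarking only that it ``is just an observation, and needs no proof''; your unwinding of the definitions is exactly the kind of verification the author deemed unnecessary to spell out, and your care in checking that $\tilde{G'}=\tilde{G}$ (via the observation that adding an edge between distinct components creates no new cycles, so the bridge/non-bridge status of every old edge is preserved) is the only point where anything needs to be said.
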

  %
  %
\begin{lemma} \label{lem.basic2} \cite{msw05}
  If the graph $G$ has $n$ vertices, then $e_0(G) \leq n - \kappa(G)$;
  and if $\kappa(G)=k+1$ then $\cross(G) \geq k(n-k) + {k \choose 2} \geq k(n-k)$.  
\end{lemma}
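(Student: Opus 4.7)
The plan is to handle the two assertions separately; both are elementary, so the main task is just to present the standard arguments cleanly.

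For the first inequality $e_0(G) \leq n - \kappa(G)$, I would use spanning forests. Fix a spanning forest $F$ of $G$, which has exactly $n - \kappa(G)$ edges (one fewer than the size of each component). I claim every bridge $e$ of $G$ belongs to $F$: indeed, a bridge lies in no cycle, so adding $e$ to $F$ would produce no cycle, and by maximality of $F$ as an acyclic spanning subgraph within its component, $e$ must already be in $F$. (Equivalently, within the component of $G$ containing $e$, deleting $e$ disconnects that component, so any spanning tree of that component must contain $e$.) Therefore $e_0(G) \leq |E(F)| = n - \kappa(G)$.

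For the second inequality, let the $k+1$ components of $G$ have sizes $n_1,\dots,n_{k+1}$ with $n_i \geq 1$ and $\sum_i n_i = n$. Then
\[
\cross(G) \;=\; \sum_{1 \leq i < j \leq k+1} n_i n_j \;=\; \tfrac{1}{2}\Bigl(n^2 - \sum_{i} n_i^2\Bigr),
\]
so it suffices to show $\sum_i n_i^2 \leq (n-k)^2 + k$, since a short computation shows $\tfrac{1}{2}(n^2 - (n-k)^2 - k) = k(n-k) + \binom{k}{2}$. The maximum of $\sum_i n_i^2$ over compositions of $n$ into $k+1$ positive parts is achieved at the most unbalanced partition $(n-k,1,\dots,1)$: if two parts $a \geq b \geq 2$ occur, replacing them by $(a+1, b-1)$ keeps the sum fixed and strictly increases $\sum n_i^2$ by $2(a-b)+2 > 0$; iterating drives the configuration to one large part and $k$ singletons, yielding $\sum n_i^2 = (n-k)^2 + k$. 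The final inequality $k(n-k) + \binom{k}{2} \geq k(n-k)$ is obvious.

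There is no real obstacle: the content is just the spanning-forest characterisation of bridges and a one-line rearrangement/exchange argument for the extremal component-size profile. The only thing to be slightly careful about is the boundary case $k = 0$ (where both statements are vacuous or trivial, with $\cross(G) \geq 0$) and the case $n-k = 1$ in the second part, which is automatically handled by the formula.
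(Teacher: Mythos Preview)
Your proof is correct. For the second inequality your exchange argument is essentially the paper's: the paper observes directly that if $0 < |X| \leq |Y|$ then $|X|\,|Y| > (|X|-1)(|Y|+1)$, which is the same move as your replacement $(a,b) \mapsto (a+1,b-1)$, just applied to the pairwise products rather than routed through $\sum_i n_i^2$; both drive the extremum to one component of size $n-k$ plus $k$ singletons.

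For the first inequality your spanning-forest argument is valid but the paper takes a shorter route using machinery it has already set up: recalling that $\tilde{G}$ is $G$ with all bridges deleted, one has $\kappa(\tilde{G}) = \kappa(G) + e_0(G)$, and since $\tilde{G}$ has $n$ vertices this is at most $n$. Your argument has the mild advantage of being self-contained (not relying on the earlier definition of $\tilde{G}$), while the paper's one-liner exploits that definition to avoid any discussion of spanning trees.
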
  
  
\begin{proof}
  Observe that $\kappa(G) + e_0(G) = \kappa(\tilde{G}) \leq n$, so $e_0(G) \leq n-\kappa(G)$.
  Now consider the second inequality, and assume that $\kappa(G)=k+1$.
  Since if $0<|X| \leq |Y|$ then $|X| |Y| >(|X|-1)(|Y|+1)$, we see that $\cross(G)$ is minimised when
  $G$ consists of $k$ singleton components and one other component.
\end{proof}  
\medskip

  Now let us recall a well-known elementary fact.
  Let $X$ and $Y$ be random variables taking non-negative integer values, and suppose that $X \leq_s Y$: then
\begin{equation} \label{eqn.exp}
  \E X \leq \E Y.
\end{equation}
  To prove this, note that
\[ \E X = \sum_{t \geq 1} \pr(X \geq t) \leq \sum_{t \geq 1} \pr(Y \geq t) = \E Y. \]

  The next two lemmas concern bounding a random variable by a Poisson-distributed random variable.
  The first lemma is stated in a general form which quickly gives the second and which is 
  suitable also for using later.  
\begin{lemma} \label{lem.po1}
  Let the random variable $X$ take non-negative integer values. 
  Let $\alpha>0$ and let $Y \sim \Po(\alpha)$. Let $k_0$ be a positive integer, and suppose that
\[ \pr(X=k+1) \leq \frac{\alpha}{k+1} \, \pr(X=k) \;\; \mbox{ for each } k=0,1,\ldots,k_0-1. \]
  Then
\[ \pr(k_0 \geq X \geq k) \leq \pr(Y \geq k) \;\; \mbox{ for each } k=0,1,\ldots,k_0 .\]
\end{lemma}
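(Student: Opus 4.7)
The approach is to exploit that the hypothesis forces the ratio
$r_k := \pr(X = k)/\pr(Y = k)$
to be non-increasing on $\{0,1,\ldots,k_0\}$. Indeed, since $Y \sim \Po(\alpha)$ satisfies $\pr(Y = k+1) = (\alpha/(k+1))\pr(Y = k)$ as an equality, the assumed inequality $\pr(X = k+1) \leq (\alpha/(k+1))\pr(X = k)$ for $k < k_0$ translates directly into $r_{k+1} \leq r_k$ on that range (with the convention that if $\pr(X = k) = 0$ then the recursion forces $\pr(X = j) = 0$ for all $k \leq j \leq k_0$, so monotonicity holds vacuously).

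Given this monotonicity, I would fix $k \in \{1,\ldots,k_0\}$ and split according to whether $r_k \leq 1$ or $r_k > 1$. In the first subcase, forward monotonicity gives $r_j \leq 1$ for every $k \leq j \leq k_0$, hence $\pr(X = j) \leq \pr(Y = j)$ termwise, and summing yields
\[ \pr(k_0 \geq X \geq k) = \sum_{j=k}^{k_0}\pr(X = j) \leq \sum_{j=k}^{k_0}\pr(Y = j) \leq \pr(Y \geq k). \]
In the second subcase $r_k > 1$, backward monotonicity gives $r_j \geq 1$, and hence $\pr(X = j) \geq \pr(Y = j)$, for all $0 \leq j < k$; summing these small-value inequalities yields $\pr(X \leq k-1) \geq \pr(Y \leq k-1)$, and then, using that $X$ has total mass at most one,
\[ \pr(k_0 \geq X \geq k) \leq \pr(X \geq k) = 1 - \pr(X \leq k-1) \leq 1 - \pr(Y \leq k-1) = \pr(Y \geq k). \]
The boundary case $k = 0$ is trivial since $\pr(Y \geq 0) = 1$.

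The only step I expect to require any care is the second subcase: the bound does not come from a direct termwise comparison of tail masses, but instead from the observation that any excess mass of $X$ at small values must be compensated by reduced mass further out, which is exactly the constraint the argument exploits. Everything else is essentially a routine consequence of the Poisson recursion being tight.
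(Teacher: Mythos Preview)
Your proof is correct. The paper's argument proceeds by induction on $k$: assuming $\pr(k_0 \geq X \geq k) \leq \pr(Y \geq k)$, it subtracts $\pr(X=k)$ and $\pr(Y=k)$ from the two sides when $\pr(X=k) \geq \pr(Y=k)$ to obtain the next case, and otherwise uses the iterated recursion $\pr(X=k+i) \leq \frac{\alpha^i}{(k+i)_i}\,\pr(X=k)$ to bound $\sum_{i\geq 1}\pr(X=k+i)$ directly by $\pr(Y=k)\sum_{i\geq 1}\frac{\alpha^i}{(k+i)_i}=\pr(Y\geq k+1)$. Your approach is non-inductive: you package the same dichotomy via the monotone ratio $r_k = \pr(X=k)/\pr(Y=k)$, and in the case $r_k > 1$ you pass to the complementary lower tail $\pr(X \leq k-1) \geq \pr(Y \leq k-1)$ rather than invoking an inductive hypothesis. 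The two arguments share the same case split (comparing $\pr(X=k)$ with $\pr(Y=k)$) and the same underlying observation that the Poisson recursion holds with equality, but yours is a little more direct and sidesteps the induction entirely.
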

\smallskip

\begin{proof}
  Observe that, for each $k=0, 1, \ldots, k_0-1$ we have
\[ \pr(X=k+i) \leq \frac{\alpha^i}{(k+i)_i} \, \pr(X=k) \;\; \mbox{ for each } i=1,\ldots,k_0 -k. \]
  Clearly $\pr(k_0 \geq X \geq 0) \leq 1 = \pr(Y \geq 0)$.
  Let $k_0 > k \geq 0$ and suppose that $\pr(k_0 \geq X \geq k) \leq \pr(Y \geq k)$.
  We want to show that $\pr(k_0 \geq X \geq k+1) \leq \pr(Y \geq k+1)$
  to complete the proof by induction.  This is immediate if
  $\pr(X = k) \geq \pr(Y = k)$, so assume that this is not the case. Then 
 \begin{eqnarray*}
   \pr(k_0 \geq X \geq k+1)
  &=&
    \sum_{i=1}^{k_0-k} \pr(X=k+i) \\
  &\leq &
   \pr(X=k) \sum_{i \geq 1} \frac{\alpha^i}{(k+i)_i} \\
  & \leq &
   \pr(Y=k) \sum_{i \geq 1} \frac{\alpha^i}{(k+i)_i} \\
  & = &
   \pr(Y \geq k+1)
\end{eqnarray*}
  as required.
\end{proof}  
\medskip

  \noindent
  From the last lemma with $k_0$ large we obtain
\begin{lemma} \label{lem.po2} (see~\cite{msw05})
  Let the random variable $X$ take non-negative integer values. 
  Let $\alpha>0$ and suppose that
\[ \pr(X=k+1) \leq \frac{\alpha}{k+1} \, \pr(X=k) \;\; \mbox{ for each } k=0,1,2,\ldots \]
  Then $X \leq_s Y$
  where $Y \sim \Po(\alpha)$.
\end{lemma}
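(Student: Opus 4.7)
The plan is to deduce Lemma~\ref{lem.po2} directly from Lemma~\ref{lem.po1} by letting $k_0 \to \infty$. The hypothesis of Lemma~\ref{lem.po2} is precisely the hypothesis of Lemma~\ref{lem.po1}, except that it is assumed to hold for \emph{all} non-negative integers $k$ rather than only for $k=0,1,\ldots,k_0-1$. In particular, for any choice of positive integer $k_0$ the hypothesis of Lemma~\ref{lem.po1} is satisfied, so we obtain
\[ \pr(k_0 \geq X \geq k) \leq \pr(Y \geq k) \quad \mbox{for each } k=0,1,\ldots,k_0. \]

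Now fix $k \geq 0$. The events $\{k_0 \geq X \geq k\}$ are increasing in $k_0$, with union $\{X \geq k\}$, so by continuity of probability the left-hand side above tends to $\pr(X \geq k)$ as $k_0 \to \infty$. Hence $\pr(X \geq k) \leq \pr(Y \geq k)$ for every $k \geq 0$. For integer-valued random variables this is equivalent to $\pr(X \leq t) \geq \pr(Y \leq t)$ for every $t$ (taking $k = t+1$, and noting the $k=0$ case gives $1 \leq 1$ trivially), which is the definition of $X \leq_s Y$. There is no real obstacle: the only ingredient beyond Lemma~\ref{lem.po1} is the observation that its hypothesis here is inherited uniformly in $k_0$, plus a standard monotone passage to the limit.
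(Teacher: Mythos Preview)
Your proof is correct and follows essentially the same approach as the paper: apply Lemma~\ref{lem.po1} for arbitrary $k_0$ and pass to the limit. The paper phrases the limiting step as an $\eps$-argument (choose $k_0 \geq k$ with $\pr(X>k_0)<\eps$, so $\pr(X \geq k) \leq \pr(Y \geq k)+\eps$), whereas you invoke continuity of measure under increasing unions; these are equivalent formulations of the same idea.
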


\begin{proof}
  Fix $k \geq 0$.  Let $\eps>0$ and choose $k_0 \geq k$ such that $\pr(X>k_0)<\eps$.
  By Lemma~\ref{lem.po1}
\[ \pr(X \geq k) = \pr(k_0 \geq X \geq k) + \pr(X>k_0) \leq \pr(Y \geq k)+ \eps,\]
  and thus $ \pr(X \geq k) \leq \pr(Y \geq k)$.
\end{proof}
\medskip

\begin{proofof} {Theorem~\ref{prop.tauconn}}
  %
  %
  %
  It suffices to assume that $\ca$ is $\ca_n$ for some $n$, since the sets
  $\ca_n$ are disjoint. Let $\ca_n^{k}$ denote the set of graphs in $\ca_n$ with $k$ components.
   Let $1 \leq k \leq n-1$.
  By Lemmas~\ref{lem.basic1} and~\ref{lem.basic2}
\begin{eqnarray*}
 \tau(\ca_n^{k}) \cdot(n-k)
 & \geq &
  \sum_{G,e} \{ \tau(G): G \in \ca_n^{k}, e \in \Bridge(G)\}\\
 & \geq &
  \sum_{H,e} \{ \tau(H): H \in \ca_n^{k+1}, e \in \Cross(H) \} \cdot (\lambda/\nu)\\
 & \geq &
   \tau(\ca_n^{k+1}) \cdot k(n-k) \cdot (\lambda/\nu).
\end{eqnarray*}
    Therefore
\[   \tau(\ca_n^{k+1}) \leq \frac{\nu}{\lambda k} \  \tau(\ca_n^{k}). \]
%
  %
  %
  Thus for $R \in_{\tau} \ca$
\[
  \pr(\kappa(R) = k+1) \leq \frac{\nu}{\lambda k} \pr(\kappa(R) = k) \;\; \mbox{ for each } k=1,2,\ldots.
\]
  and so, writing $X=\kappa(R)-1$
\[
  \pr(X = k+1) \leq \frac{\nu}{\lambda (k+1)} \pr(X = k) \;\; \mbox{ for each } k=0,1,2,\ldots,
\]
  %
  Hence if $\alpha= \nu/\lambda$ and $Y \sim\Po(\alpha)$ we have  $X \leq_s Y$ by Lemma~\ref{lem.po2}.
  Finally, 
  by~(\ref{eqn.exp}), $\; \E[\kappa(R)] = 1+\E[X] \leq 1+ \E[Y] = 1 + \nu/\lambda$.
\end{proofof}
  
  %
  %
  %
  %
  \bigskip  

  
  To prove Theorem~\ref{prop.fragbound} we use two lemmas.  The first is another basic lemma on graphs.
\begin{lemma} \label{lem.basic3} \cite{cmcd08}
  If the graph $G$ has $n$ vertices, then $\cross(G) \geq (n/2) \cdot \frag(G)$.  
\end{lemma}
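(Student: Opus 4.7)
The plan is to reduce the inequality to a simple estimate on sums of squares of the component sizes. Let the connected components of $G$ have orders $n_1 \geq n_2 \geq \cdots \geq n_k$, so that $\sum_i n_i = n$, the largest component has $n_1$ vertices, and therefore $\frag(G) = n - n_1$.

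Next I would express $\cross(G)$ explicitly in terms of the $n_i$. Since $\Cross(G)$ is the set of unordered pairs of vertices lying in distinct components, we have
\[
\cross(G) \;=\; \sum_{i<j} n_i n_j \;=\; \tfrac{1}{2}\Bigl(\bigl(\textstyle\sum_i n_i\bigr)^2 - \sum_i n_i^2\Bigr) \;=\; \tfrac{1}{2}\Bigl(n^2 - \sum_i n_i^2\Bigr).
\]
Comparing with the target quantity $\tfrac{n}{2}\frag(G) = \tfrac{1}{2}(n^2 - n\, n_1)$, the desired inequality $\cross(G) \geq \tfrac{n}{2}\frag(G)$ is then equivalent to $\sum_i n_i^2 \leq n\, n_1$.

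The key (and really only) step is the observation that $n_i \leq n_1$ for every $i$, so $n_i^2 \leq n_1 n_i$, and summing over $i$ gives $\sum_i n_i^2 \leq n_1 \sum_i n_i = n_1 n$. Substituting back yields
\[
\cross(G) \;=\; \tfrac{1}{2}\bigl(n^2 - \textstyle\sum_i n_i^2\bigr) \;\geq\; \tfrac{1}{2}(n^2 - n_1 n) \;=\; \tfrac{n}{2}(n - n_1) \;=\; \tfrac{n}{2}\frag(G),
\]
as required. There is no real obstacle: the argument is pure arithmetic once the component orders are fixed, and equality holds exactly when all non-largest components are singletons (or when every component has the same size, so that $n_i^2 = n_1 n_i$ for all $i$).
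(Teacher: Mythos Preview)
Your proof is correct and follows essentially the same approach as the paper: both reduce to bounding $\sum_i n_i^2 \leq n_1 n$ (equivalently $\sum_i \binom{n_i}{2} \leq \tfrac12 n(n_1-1)$), though your one-line bound $n_i^2 \leq n_1 n_i$ is more direct than the paper's convexity detour via $n = a n_1 + y$. One small slip in your closing remark: equality in $\sum_i n_i^2 \leq n_1 n$ holds precisely when every component has size $n_1$, not when the non-largest components are singletons.
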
   
\begin{proof}
  An easy convexity argument shows that if $x,x_1,x_2,\ldots$ are
  positive integers such that each $x_i \leq x$ and $\sum_i x_i=n$
  then $\sum_i {x_i \choose 2} \leq \frac12 n (x-1)$.
  For, if $n=ax+y$ where $a \geq 0$ and $0 \leq y \leq x-1$ are integers, then
  \[ \sum_i {x_i \choose 2} \leq a {x \choose 2} + {y \choose 2}
  \leq a {x \choose 2} + \frac{y(x-1)}{2} = \frac12 n (x-1).\]
  Hence if we denote the maximum number of vertices in a component by $x$,
  so that $\frag(G) = n-x$, then 
  \[\cross(G) \geq {n \choose 2} - \frac12 n (x-1)
  = \frac12 n (n-x) = \frac12 n \ \frag(G)\]
  as required.
\end{proof}
\medskip
  
  The next lemma is phrased generally so that it can also be used later. 
  
\begin{lemma} \label{lem.frag1}
  Let $\ca=\ca_n$ be bridge-addable, and let $R \in_{\tau} \ca$.
  Let $\beta>0$ and assume that $\cross(G) \geq \beta n \cdot \frag(G)$ for each $G \in \ca$.
  Then
\[  \E [\frag(R)] \leq \frac{\nu}{\beta \lambda}.  \]  
\end{lemma}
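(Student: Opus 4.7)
The plan is a double-counting argument that converts the fragment bound first into a bound on cross-edges (using the hypothesis) and then into a bound on bridges (using bridge-addability and the weight relation $\tau(H) = (\nu/\lambda)\tau(H+e)$ from Lemma~\ref{lem.basic1}).

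First I would unfold the expectation and apply the hypothesis:
\[ \tau(\ca) \cdot \E[\frag(R)] \;=\; \sum_{H \in \ca} \tau(H)\, \frag(H) \;\leq\; \frac{1}{\beta n} \sum_{H \in \ca} \tau(H)\, \cross(H). \]
Then I would rewrite the right-hand sum as a sum over pairs $(H,e)$ with $H\in\ca$ and $e\in\Cross(H)$. By Lemma~\ref{lem.basic1} the map $(H,e)\mapsto(G,e)$ where $G=H+e$ is an injection into the set of pairs $(G,e)$ with $G\in\ca$ and $e\in\Bridge(G)$, and it satisfies $\tau(H)=(\nu/\lambda)\,\tau(G)$. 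Therefore
\[ \sum_{H \in \ca} \tau(H)\, \cross(H) \;=\; \sum_{(H,e)} \tau(H) \;\leq\; \frac{\nu}{\lambda}\sum_{G \in \ca} \tau(G)\, e_0(G). \]

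Next I would bound $e_0(G)$ using Lemma~\ref{lem.basic2}: since $e_0(G)\leq n-\kappa(G)\leq n-1<n$, we get
\[ \sum_{G \in \ca} \tau(G)\, e_0(G) \;\leq\; n\,\tau(\ca). \]
Putting the pieces together,
\[ \tau(\ca) \cdot \E[\frag(R)] \;\leq\; \frac{1}{\beta n}\cdot\frac{\nu}{\lambda}\cdot n\,\tau(\ca) \;=\; \frac{\nu}{\beta\lambda}\,\tau(\ca), \]
and dividing through by $\tau(\ca)$ yields the claimed bound.

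There is really no serious obstacle; the only thing to be careful about is the direction of the injection (bridge-addability gives $H+e\in\ca$, but we cannot assume $G-e\in\ca$, which is why we only get an inequality, not an equality, when passing from cross-edges to bridges). Phrasing the argument as a single double sum over pairs $(H,e)$ with $H\in\ca$ and $e\in\Cross(H)$ makes the one-sided nature of this step transparent and allows the bound $\sum_G \tau(G)e_0(G)\le n\,\tau(\ca)$ to absorb the slack. Note also that because $e_0(G)\leq n-1$ strictly, the same argument with a single extra line yields the strict inequality $\E[\frag(R)]<\nu/(\beta\lambda)$, which is what is needed for Theorem~\ref{prop.fragbound} when combined with Lemma~\ref{lem.basic3} (giving $\beta=1/2$).
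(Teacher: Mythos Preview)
Your argument is correct and is essentially identical to the paper's proof: both use the hypothesis to pass from $\frag$ to $\cross$, then apply Lemma~\ref{lem.basic1} to bound the cross-edge sum by the bridge sum, and finish with $e_0(G)<n$. Your remarks on the direction of the injection and on the strict inequality are also exactly the points the paper relies on.
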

  
\begin{proof}
  Using Lemma~\ref{lem.basic1} 
\begin{eqnarray*}
 && \beta n \, \sum_{G \in \ca} \tau(G) \, \frag(G)\\
 & \leq &
   \sum_{G, e} \{ \tau(G): G \in \ca, e \in \Cross(G) \} \\
& \leq &
   \frac{\nu}{\lambda} \ \sum_{G', e} \{ \tau(G'): G' \in \ca, e \in \Bridge(G')\}\\
 & = &
   \frac{\nu}{\lambda} \ \sum_{G \in \ca} \tau(G) \cdot e_0(G).
\end{eqnarray*}
  Thus
\[
  \E[\frag(R)]  \leq \frac{1}{ \beta n} \, \frac{\nu}{\lambda} \, \E[e_0(R)] <  \frac{\nu}{\beta \lambda}
\]
  as required.
\end{proof}
  \medskip

\begin{proofof} {Theorem~\ref{prop.fragbound}}
  As in the proof of Theorem~\ref{prop.tauconn}
  it suffices to assume that $\ca$ is $\ca_n$ for some $n$.
  %
  By Lemma~\ref{lem.basic3}, we may now complete the proof using Lemma~\ref{lem.frag1} with $\beta= \frac12$.
\end{proofof}
\bigskip


\section{Proofs of asymptotic results} 
\label{sec.asympt1}

  In this section we prove the asymptotic results 
  Theorems~\ref{prop.asympt-conn} and~\ref{prop.fragbound2}.
  Assume throughout that $\ca$ is bridge-alterable. 
  Let us focus first on 
  Theorem~\ref{prop.asympt-conn}, and in particular on~(\ref{eqn.asympt-comps}).
  The proof goes roughly as follows.
  We first see that it suffices to prove inequality~(\ref{eqn.impliesmain3}) below concerning $\kappa(\RF)$,
  where $\RF$ is a random forest on $[n]$ which we define below, with probabilities
  depending on degrees.
  Then we use a key result from~\cite{amr2011} which tells us about average sizes of components
  of $\RT$ with an edge deleted, where $\RT$ is $\RF$ conditioned on being a tree.
  We find that $\pr(\kappa(\RF)=2)$ is suitably smaller than $\pr(\kappa(\RF)=1)$;
  and from this we deduce that in general $\pr(\kappa(\RF)=k+1)$ is suitably smaller than $\pr(\kappa(\RF)=k)$,
  and so we can use Lemma~\ref{lem.po1}.

  Now for more details.
  We may define an equivalence relation on graphs by setting $G \sim H$ if $\tilde{G}=\tilde{H}$.
  Let $[G]$ denote the equivalence class of $G$, that is, the set of graphs $H$ such that
  $\tilde{H}=\tilde{G}$.
  Let $W$ be a positive integer.
  Since $\ca$ is bridge-alterable, if $G \in \ca_W$ then $[G] \subseteq \ca_W$.
  Thus $\ca_W$ can be written as a disjoint union of equivalence classes.
  
  To prove~(\ref{eqn.asympt-comps}), 
  we may fix a (large) positive integer $W$,
  a bridgeless graph $G_0 \in \ca_W$, an integer $t \geq 1$ and real $\eps>0$; and prove that,
  if $\RG \intau [G_0]$ then
\begin{equation} \label{eqn.impliesmain2}
  \pr(\kappa(\RG) \geq t+1) \leq \pr(\Po(\frac{\nu}{2\lambda}) \geq t) + \eps
\end{equation}
  if $W$ is sufficiently large. 
  Since we are now restricting attention to $[G_0]$ 
  we may assume that $f(G_0)= 1$.  Denote $\kappa(G_0)$ by $n$: we may assume that $n \geq 2$
  (for otherwise the connected graph $G_0$ is the only graph in $[G_0]$).

  Write $C_1,\ldots,C_n$ for the components of $G_0$, and let $w_i=|V(C_i)|$
  for $i=1,\ldots,n$, so that $W = \sum_{i=1}^n w_i$.
  We use the vector $w=(w_1,\ldots,w_n)$ together with the weighting $\tau$ to define a probability measure on the
  set $\cf_n$ of forests on~$[n]$. 
  Given $F \in \cf_n$, let
\[ \mass{F} = \prod_{i=1}^n w_i^{d_F(i)} \cdot \lambda^{e(F)} \nu^{\kappa(F)} \]
  where $d_F(i)$ denotes the degree of vertex $i$ in the forest $F$.
  Also, let $K = \sum_{F \in \cf_n} \mass{F}$, and let $\RF$ be a random element of $\cf_n$ with
  $\pr(\RF =F) = \mass{F}/K$ for each $F \in \cf_n$.
%
  Corresponding to Lemma 2.3 of~\cite{amr2011}, we have
\begin{equation} \label{eqn.stocheq}
  \kappa(\RG) =_s \kappa(\RF).
\end{equation}
  %
\begin{proofof}{(\ref{eqn.stocheq})}
    Denote $[G_0]$ by $\cb$.
    Given $H \in \cb$, let $g(H)$ be the graph obtained from $H$ by contracting
    each $C_i$ to the single vertex $i$. 
    Then $g(H)\in \cf_n$ and $\kappa(H)= \kappa(g(H))$.  Also,
    for each $F \in \cf_n$, the set $g^{-1}(F)$ has cardinality
    $\prod_{i=1}^n w_i^{d_F(i)}$, and so $\tau(g^{-1}(F))= \mass{F}$.
   It follows that
\begin{eqnarray*}
   \pr(\kappa(\RG)=k) & = & \frac{\tau(\{H\in\cb : \kappa(H)=k \})}{\tau(\cb)}\\
      & = & \frac{ \sum_{\{F\in\cf_n: \kappa(F)=k \}}\tau(g^{-1}(F))}{\sum_{F\in\cf_n}\tau(g^{-1}(F))} \\
      & = & \frac{ \sum_{ \{ F\in\cf_n: \kappa(F)=k \} } \mass{F} }{K} \\
      & = & \pr(\kappa(\RF)=k)
\end{eqnarray*}
  as required.
\end{proofof}
\smallskip

  Now that we have established~(\ref{eqn.stocheq}), in order to prove~(\ref{eqn.impliesmain2}) we may show that
\begin{equation} \label{eqn.impliesmain3}
  \pr(\kappa(\RF) \geq t+1) \leq \pr(\Po(\frac{\nu}{2\lambda}) \geq t) + \eps
\end{equation}
  if $W$ is sufficiently large.

 %
  Given a graph $H$ on $[n]$, let
\[ \cross_w(H) = \sum_{uv \in \Cross(H)} w_u w_v. \]
  Observe that $\cross_w(H)$ equals 
  the sum of $w(C) w(C')$ over the unordered pairs $C$ and $C'$ of
  components of $H$, where $w(C)$ denotes $\sum_{i \in V(C)}w_i$.
  For forests $F,F' \in \cf_n$ such that $F$ can be obtained from $F'$ by deleting an edge $uv$,
  observe that $\mass{F'} = \frac{\lambda}{\nu} \cdot \mass{F} \cdot w_uw_v$.
  For such $F, F'$ we let
\begin{equation}
    \varphi(F',F) = \frac{\nu}{\lambda} \cdot \frac{ \mass {F'} }{\cross_w(F)}. \label{eq:phi_def}
\end{equation}
  For all other pairs $F,F'$, we let $\varphi(F',F)=0$.
  
  For $i=1,\ldots,n$, let $\cf_{n}^{i}$ be the set of forests in $\cf_n$ with $i$ components.
  For each $F \in \cf_{n}^{i+1}$ we have
\[
  \sum_{F'\in\cf_{n}^{i}}\varphi(F',F) = \frac{\mass{F}}{\cross_w(F)} \sum_{uv \in \Cross(F)} w_uw_v = \mass{F};
\]
  %
  and thus for each $i=1,\ldots,n-1$
\begin{equation} \label{eqn.sumflow}
 		\sum_{F'\in\cf_{n}^{i}}\sum_{F\in\cf_{n}^{i+1}}\varphi(F',F) =
 		\sum_{F\in \cf_{n}^{i+1}} \mass{F} = K\cdot\pr(\RF \in \cf_{n}^{i+1})
\end{equation}
 as in Lemma 3.1 of~\cite{amr2011}.
      
  %
 Given a tree $T$ on $[n]$ and an integer $k$ with $1 \leq k \leq \lfloor W/2 \rfloor$,
 let $c(T,k)$ be the number of edges $e$ in $T$ such that $T \setminus e$ has a component with weight~$k$.
  %
    Let $\RT$ be $\RF$ conditioned on being a tree, so that $\RT$ is a random tree on $[n]$
    with $\pr(\RT =T) \propto \mass{T}$.
 %
   The distribution of $\RT$ is exactly as in~\cite{amr2011} -- 
   the weighting is not relevant here, since $e(T)=n-1$ and $\kappa(T)=1$ are fixed,
   and thus $\pr(\RT = T) \propto \prod_{i=1}^{n} w_i^{d_T(i)}$.
   Hence from Section 4 of~\cite{amr2011} we see that for any $\eta>0$, for $W$ sufficiently large we have
\begin{equation} \label{claim.half}
 \sum_{k \geq 1} \frac{\E{[c(\RT,k)]}}{k(W-k)} \leq (1+\eta) \cdot \frac12.
\end{equation}  
  By~(\ref{eqn.sumflow}) with $i=1$,
  corresponding to lemma 4.1 of~\cite{amr2011} we have  
\begin{eqnarray*}
    \pr(\RF \in \cf_{n}^{2}) &=&
      \frac{1}{K} \frac{\nu}{\lambda} \sum_{T \in \cf_n^1} \mass{T} \sum_{e \in T} \frac{1}{\cross_w(T-e)}\\
    &=&
      \pr(\RF \in\cf_{n}^{1}) \ \frac{\nu}{\lambda} \sum_{T \in \cf_n^1} \pr(\RT =T) \sum_{k \geq 1}
      \frac{c(\RT,k)}{k(W-k)}
\end{eqnarray*}
  and so
\begin{equation} \label{eqn.fn2_bound}
      \pr(\RF \in \cf_{n}^{2}) = \pr(\RF \in\cf_{n}^{1})\ \frac{\nu}{\lambda}\ \sum_{k \geq 1}
      \frac{\E{[c(\RT,k)}]}{k(W-k)}.
\end{equation}
   From~(\ref{claim.half}) and~(\ref{eqn.fn2_bound}) we see that
   for all $\eta > 0$, for $W$ sufficiently large, for all
   $w_1,\ldots,w_n$ with $\sum_{j=1}^n w_j=W$,
 \begin{equation} \label{eqn.ratio_1_2}
        \pr(\RF \in \cf_{n}^{2}) \leq (1+\eta) \frac{\nu}{2 \lambda} \ \pr(\RF \in \cf_{n}^{1}).
  \end{equation}
            
  Now we can complete the proof of~(\ref{eqn.impliesmain3})
  (and thus of~(\ref{eqn.asympt-comps}) in Theorem~\ref{prop.asympt-conn}) 
  as follows, as in the proof of Claim 2.2 in~\cite{amr2011}.
  The next lemma will allow us to assume that $n$ is large, as well as being useful later.
  %
  Using~(\ref{eqn.sumflow}) and the proof of Lemma 3.2 of~\cite{amr2011} we find:
\begin{lemma}\label{lem.smalln2.5} 
   For each $i=1,\ldots,n-1$
   \begin{equation}\label{eqn.smalln2.5}
       \pr(\RF \in \cf_{n}^{i+1}) \leq \frac{\pr(\RF \in \cf_{n}^{i})}{i} \frac{n}{W} \frac{\nu}{\lambda}.
   \end{equation}
\end{lemma}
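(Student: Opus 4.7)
The plan is to combine the flow identity~(\ref{eqn.sumflow}) with a sharp lower bound on $\cross_w$. By~(\ref{eqn.sumflow}),
\[ K \cdot \pr(\RF \in \cf_{n}^{i+1}) = \sum_{F' \in \cf_n^i} \sum_{F \in \cf_n^{i+1}} \varphi(F',F), \]
and since the only $F \in \cf_n^{i+1}$ with $\varphi(F',F) > 0$ are the forests $F'-e$ for edges $e$ of $F'$, the inner sum equals $\frac{\nu}{\lambda}\,\mass{F'} \sum_{e \in E(F')} 1/\cross_w(F'-e)$ by~(\ref{eq:phi_def}). Thus it suffices to prove the purely combinatorial inequality
\[ \sum_{e \in E(F')} \frac{1}{\cross_w(F'-e)} \;\leq\; \frac{n}{iW} \qquad \mbox{for every } F' \in \cf_n^i; \]
summing over $F'$ will then give $\pr(\RF \in \cf_{n}^{i+1}) \leq \frac{n\nu}{iW\lambda}\,\pr(\RF \in \cf_{n}^{i})$, which is~(\ref{eqn.smalln2.5}).

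For that combinatorial inequality, note $|E(F')| = n-i$. For each edge $e$ of $F'$, the forest $F'-e$ has $i+1$ components whose weights $V_1,\ldots,V_{i+1}$ sum to $W$, and each $V_j \geq 1$ since every component contains at least one original vertex and every $w_k \geq 1$. By a standard convexity argument, $\sum_j V_j^2$ subject to $V_j \geq 1$ and $\sum_j V_j = W$ is maximised by setting one $V_j = W-i$ and the rest to $1$, giving $\sum_j V_j^2 \leq (W-i)^2 + i$, so
\[ \cross_w(F'-e) = \tfrac12 \left( W^2 - \sum_j V_j^2 \right) \;\geq\; iW - {i+1 \choose 2}. \]
Hence $\sum_{e} 1/\cross_w(F'-e) \leq (n-i)/(iW - {i+1 \choose 2})$, and cross-multiplying shows this is bounded by $n/(iW)$ provided $W \geq n(i+1)/(2i)$. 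Since $W \geq n$ (as each $w_k \geq 1$) and $(i+1)/(2i) \leq 1$ for $i \geq 1$, the required bound holds.

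The main obstacle will be keeping the constant sharp: the easy bound $\cross_w(F'-e) \geq iW/2$, obtained from $\max_j V_j \leq W-i$ alone, loses almost a factor of two and would miss~(\ref{eqn.smalln2.5}) by roughly that amount. To land on $\frac{n\nu}{iW\lambda}$ exactly, one really needs the tighter convexity estimate above, together with the inequality $W \geq n$. Up to this weighted bookkeeping, the argument mirrors the proof of Lemma~3.2 of~\cite{amr2011}.
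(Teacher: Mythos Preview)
Your proof is correct and follows exactly the approach the paper indicates: the paper merely says ``Using~(\ref{eqn.sumflow}) and the proof of Lemma 3.2 of~\cite{amr2011}'', and you have spelled out precisely this---rewriting the double sum over $F'$, applying the weighted analogue of Lemma~\ref{lem.basic2} (namely $\cross_w(F'-e) \geq iW - \binom{i+1}{2}$, the minimum occurring when $i$ components have weight~$1$), and then using $W \geq n$ to close the estimate. The bookkeeping is right, including the edge count $|E(F')|=n-i$ and the cross-multiplication step.
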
 
 If $W \geq 2n$ then by the above result and Lemma~\ref{lem.po2},
 $\kappa(\RF) \leq_s 1+ \Po(\frac{\nu}{2 \lambda})$ and so~(\ref{eqn.impliesmain3}) holds.
 Thus we may assume from now on 
 that $W<2n$.
 
 Next we introduce Lemma 3.3 of~\cite{amr2011}.
 For each finite non-empty set $V$ of positive integers,
  let $\cg_V$ denote the set of all graphs on the vertex set $V$,
  and let $\cg^k_V$ denote the set of all graphs in $\cg_V$ with exactly
  $k$ components.
  %
  For each positive integer $n$, let $\mu_n$ be a measure on the set of all graphs
  with vertex set a subset of $[n]$ which is \emph{multiplicative on components};
  that is, if $G$ has components $H_1,\ldots,H_k$ then
  $\mu_n(G)=\prod_{i=1}^k \mu_n(H_i)$. 
  Observe that we obtain such a measure
  if we set $\mu_n(G) = \mass{G}$ when $G$ is a forest and $\mu_n(G)=0$ otherwise. 
\begin{lemma}\label{lem.connect_1} (\cite{amr2011}) 
  Suppose there exist $\alpha > 0$ and integers $n \geq m_0 \geq 1$ such that
  \begin{equation}\label{eq.connect_1_a}
    \mu_n(\cg^2_V) \leq \alpha \mu_n(\cg^1_V) \;\;\; \mbox{for all }
    V \subseteq \{1,\ldots,n\} \mbox{ with } |V| \geq m_0.
  \end{equation}
  Then for all integers $k \geq 1$ and $n \geq km_0$
  \begin{equation}
    \label{eq:connect_1}
    \mu_{n}(\cg_{[n]}^{k+1})\leq \frac{\alpha}{k}\mu_n(\cg_{[n]}^k).
  \end{equation}
\end{lemma}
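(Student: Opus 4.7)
My plan is to prove the slightly stronger statement by induction on $k\geq 1$: for every $V\subseteq[n]$ with $|V|\geq km_0$,
\[ \mu_n(\cg^{k+1}_V)\leq \frac{\alpha}{k}\,\mu_n(\cg^k_V), \]
from which the conclusion (\ref{eq:connect_1}) follows by specialising to $V=[n]$. The base case $k=1$ is precisely the hypothesis (\ref{eq.connect_1_a}).

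For the inductive step I would write $a_U:=\mu_n(\cg^1_U)$ and use multiplicativity of $\mu_n$ on components to expand $\mu_n(\cg^j_W) = \sum_{\cp}\prod_{U\in\cp}a_U$, where the sum ranges over set partitions of $W$ into $j$ non-empty blocks. The central identity, obtained by double-counting pairs (partition of $V$ into $k{+}1$ blocks, unordered choice of two of its blocks) and then merging the chosen pair, is
\[
  \binom{k+1}{2}\,\mu_n(\cg^{k+1}_V) \;=\; \sum_{\cp\,\in\,\cp_k(V)}\; \sum_{U\in\cp}\,\mu_n(\cg^2_U)\prod_{U'\in\cp\setminus\{U\}}a_{U'},
\]
where $\cp_k(V)$ denotes the partitions of $V$ into $k$ non-empty blocks. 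I would split the right-hand side according to whether $|U|\geq m_0$ (``large'') or $|U|<m_0$ (``small''). The large contribution is bounded by the base hypothesis $\mu_n(\cg^2_U)\leq \alpha a_U$ and resums to at most $\alpha k\,\mu_n(\cg^k_V)$. For the small contribution I would expand $\mu_n(\cg^2_U)=\tfrac12\sum_{U=W\sqcup W'}a_Wa_{W'}$ and re-index as a sum over $(k{+}1)$-partitions of $V$ with a distinguished pair of small blocks, then feed this back using the inductive hypothesis applied to the appropriate subsets $V\setminus(\text{small block})$.

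The main obstacle will be the bookkeeping needed to make the coefficient on $\mu_n(\cg^k_V)$ come out to exactly $\alpha/k$, rather than the weaker $2\alpha/(k+1)$ that a naive pointwise application of the hypothesis would produce. The condition $|V|\geq km_0$ is used in two complementary ways: by averaging, every $\cp\in\cp_k(V)$ contains at least one block of size $\geq m_0$, which anchors the resumming of the large terms; and every subset $V\setminus U$ arising in the small-block expansion still has $|V\setminus U|\geq(k-1)m_0$, so the inductive hypothesis is available on it. After combining the large and small contributions and absorbing the bad term back into $\mu_n(\cg^{k+1}_V)$, the clean coefficient $\alpha/k$ emerges by rearrangement, completing the induction.
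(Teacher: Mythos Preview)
The paper does not prove this lemma; it is quoted from \cite{amr2011} (their Lemma~3.3) and used as a black box. So there is no in-paper proof to compare against, and the question is simply whether your sketch can be completed.

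Your identity
\[
\binom{k+1}{2}\,\mu_n(\cg^{k+1}_V)\;=\;\sum_{\cp\in\cp_k(V)}\;\sum_{U\in\cp}\mu_n(\cg^2_U)\prod_{U'\in\cp\setminus\{U\}}a_{U'}
\]
is correct, and so are the two pigeonhole observations you make about $|V|\ge km_0$. The difficulty is exactly where you say it is, and your final paragraph does not resolve it. Two concrete obstructions:

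\emph{The ``absorb back'' route only yields $\alpha$, not $\alpha/k$.} One can show every $(k{+}1)$-partition of $V$ has at least $k$ ``good'' pairs (pairs of blocks with union of size $\ge m_0$): if some block has size $\ge m_0$ it forms $k$ good pairs with the others, while if all blocks are smaller than $m_0$ then for any pair the remaining $k{-}1$ blocks have total size at most $(k{-}1)(m_0{-}1)<|V|-m_0$, so \emph{every} pair is good. Hence $S\le\binom{k}{2}\mu_n(\cg^{k+1}_V)$, and subtracting leaves $k\,\mu_n(\cg^{k+1}_V)\le L$. But the bound on $L$ is $\alpha$ times a sum over $k$-partitions weighted by the \emph{number} of large blocks, which can be as big as $k$; so one only gets $\mu_n(\cg^{k+1}_V)\le\alpha\,\mu_n(\cg^k_V)$.

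\emph{The inductive hypothesis on $V\setminus(\text{small block})$ points the wrong way.} With $|V\setminus U|\ge(k{-}1)m_0$ the hypothesis gives $\mu_n(\cg^{k}_{V\setminus U})\le\frac{\alpha}{k-1}\mu_n(\cg^{k-1}_{V\setminus U})$, an \emph{upper} bound on the $k$-component count; but $S$ is expressed through $\mu_n(\cg^{k-1}_{V\setminus U})$, which you would need to bound \emph{above}. If instead one routes through $\mu_n(\cg^k_{V\setminus B})$ and then applies the hypothesis, a short computation gives $S\le\frac{k^2\alpha}{2(k-1)}\mu_n(\cg^k_V)$, and combining with $L\le\alpha k\,\mu_n(\cg^k_V)$ produces a constant strictly worse than $\alpha/k$ for every $k\ge2$.

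So the proposal, as it stands, does not deliver the factor $\alpha/k$. What is missing is a mechanism that prevents the count of large blocks on the $\cg^k$ side from contributing a spurious factor of $k$; the vague ``rearrangement'' in your last sentence is exactly the step that needs an idea.
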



  We may now complete the proof of~(\ref{eqn.impliesmain3}). 
  %
  Fix $j \geq t$ large enough that
\[ \sum_{i \geq j} \left(\frac{\nu}{\lambda}\right)^i \frac{1}{i!} \leq \frac{\eps}{2}.\]
  Fix $\eta > 0$ small enough that, with $\alpha=(1+\eta) \frac{\nu}{2 \lambda}$,
  we have
\[ \pr(\Po(\alpha) \geq t) \leq \pr(\Po(\frac{\nu}{2 \lambda}) \geq t) + \eps/2. \]
  %
  %
  By~(\ref{eqn.ratio_1_2}) and Lemma \ref{lem.connect_1}
  it follows that, for $W$ large enough (recall that $n >W/2$), for all $i$ with $1\leq i \leq j$ we have
\[  \pr(\kappa(\RF)=i+1) \le \frac{\alpha}{i} \pr(\kappa(R^f)=i). \]
  In terms of $X= \kappa(\RF)-1$, this says that
\begin{equation} \label{eqn.krf}
  \pr(X=i+1) \le \frac{\alpha}{i+1} \pr(X=i) \;\; \mbox{ for } i=0,1,\ldots,j-1.
\end{equation} 
%
  Also, by Lemma \ref{lem.smalln2.5}, for all $i \geq 1$ 
\[ \pr(\RF \in \cf_{n}^{i+1})\leq \left(\frac{n \nu}{W \lambda}\right)^i \frac{1}{i!}
   \leq \left(\frac{\nu}{\lambda}\right)^i \frac{1}{i!}\]
  and so it follows by our choice of $j$ that
\[ \pr(X \geq j) = \pr(\kappa(\RF) \geq j+1) \leq \eps/2. \]
  Hence by~(\ref{eqn.krf}) and Lemma~\ref{lem.po1},
\begin{eqnarray*}
 \pr(X \geq t)
 & = & \pr(j-1 \geq X \geq t) + \pr(X \geq j)\\
 & \leq & \pr(\Po(\alpha) \geq t) + \eps/2\\
  & \leq & \pr(\Po(\frac{\nu}{2\lambda}) \geq t) + \eps.
\end{eqnarray*}
%


  This completes the proof of~(\ref{eqn.impliesmain3}) and thus of~(\ref{eqn.asympt-comps}).
  The inequality~(\ref{eqn.asympt-conn}) follows directly from~(\ref{eqn.asympt-comps}),
  so it remains only to prove~(\ref{eqn.asympt-Ekappa}).  Let $\eps>0$.  By Theorem~\ref{prop.tauconn},
  if $Y \sim \Po(\nu/\lambda)$ then
\[ \E[\kappa(R_n) {\bf 1}_{\kappa(R_n) \geq t+1}] \leq \E[(1+Y) {\bf 1}_{Y \geq t}] < \eps/2\]
  if $t$ is sufficiently large.  Fix such a $t$.
  By~(\ref{eqn.impliesmain2}) (applied for each value up to~$t$, and with $\eps$ replaced by $\frac{\eps}{2t}$), for $n$ sufficiently large
\begin{eqnarray*}
  \E[\kappa(R_n) {\bf 1}_{\kappa(R_n) \leq t}]
  &=& 
  \sum_{i=1}^{t} \pr(t \geq \kappa(R_n)\geq i)\\
  & \leq & \sum_{i=0}^{t-1} \pr(\Po(\frac{\nu}{2 \lambda})\geq i) + \eps/2\\
  & \leq & 1+ \E[\Po(\frac{\nu}{2 \lambda})] + \eps/2 = 1 + \frac{\nu}{2 \lambda} + \eps/2.
\end{eqnarray*}  
  Hence, for $n$ sufficiently large, $\E[\kappa(R_n)] \leq 1 + \frac{\nu}{2 \lambda}+ \eps$, and we are done.
   We have now completed the proof of Theorem~\ref{prop.asympt-conn}.


\subsection{Proof of Theorem~\ref{prop.fragbound2}}
\label{sec.fragproof}

  Let $\eps>0$, and let $\ca'_n = \{G \in \ca_n:\frag(G) \leq \eps n\}$.  Then $\ca'_n$ is bridge-addable.
  Also, for each $G \in \ca'_n$ we have $\cross(G) \geq (1-\eps)n \cdot \frag(G)$.
  Hence by Lemma~\ref{lem.frag1} with $\beta=1-\eps$ we have
\[  \E[\frag(R_n) {\bf 1}_{\frag(R_n) \leq \eps n}] < (1-\eps)^{-1} (\nu/\lambda). \]
  Thus it suffices for us to show that
\begin{equation} \label{eqn.frageps}
  \E[\frag(R_n) {\bf 1}_{\frag(R_n) > \eps n}] \; = \; o(1) \;\; \mbox{  as } n \to \infty.
\end{equation}
  
  We proceed as in the proof of Theorem~\ref{prop.asympt-conn}. 
  For a graph $G$ on $[n]$ let $\wfrag(G)$ denote $W$ minus the maximum weight $w(C)$ of a component $C$ of~$G$.
  Then corresponding to~(\ref{eqn.stocheq}) we have
\[ \frag (\RG) =_s \wfrag(\RF). \]
  To see this we may argue as in the proof of~(\ref{eqn.stocheq}), 
  recalling that for each $H \in \cb =[G_0]$, $g(H)$ is the forest obtained by contracting the components $C_i$ of $G_0$, and  
  noting that we have $\frag(H)=\wfrag(g(H))$.
  Thus it suffices to consider $\RF$
  and show that $\E[X {\bf 1}_{X > \eps W}]$ is $o(1)$ as $W \to \infty$,
  where $X = \wfrag(\RF)$. 
  \smallskip

  Define $\ttau(F)$ to be $\tau(g^{-1}(F))$ for each $F \in \cf_n$; and for $\ca \subseteq \cf_n$ let
  $\ttau(\ca) = \sum_{F \in \ca} \ttau(F) = \tau(g^{-1}(\ca))$.
  Then $\ttau(\cf_n)=\tau(\cb)$, and $\ttau(\ct_n)=\tau(\cc)$ where $\cc$ is the set of connected graphs in $\cb$.
  Thus by Theorem~\ref{prop.tauconn} we have
  $\ttau(\ct_n) \geq e^{-\alpha} \ttau(\cf_n)$, where we let $\alpha$ denote $\nu / \lambda$.
  Recall that we take $f(G) \equiv 1$ without loss of generality, and so
  $\tau(H)= \alpha \lambda^n$ for each connected graph $H$ in $\cb$.
  Also
\[ |g^{-1}(\ct_n)| = \prod_{i=1}^{n} w_i \cdot W^{n-2}\]
   where $W=\sum_{i=1}^n w_i$. 
   This counting result goes back to Moon~\cite{moon67} in 1967 and R\'enyi~\cite{renyi1970} in 1970
   (see also Pitman~\cite{pitman99}) and appears in 
   the proof of Lemma 4.2 of~\cite{amr2011}.  Hence
\[ \ttau(\ct_n) = \alpha \lambda^n \prod_{i=1}^{n} w_i \cdot W^{n-2}.\]
  For each non-empty set $A$ of positive integers, let $\ct_A$ and $\cf_A$ denote respectively the sets of
  trees and forests on vertex set $A$, and let $W_A$ denote $\sum_{i \in A} w_i$.
  Let $1 \leq a \leq n-1$ and let $A \subseteq [n]$ with $|A|=a$.  Denote $[n] \setminus A$ by $\bar{A}$. Then
\begin{eqnarray*}
    \ttau(\cf_A) \ttau(\cf_{\bar{A}})
    & \leq & e^{2 \alpha}  \ttau(\ct_A) \ttau(\ct_{\bar{A}})\\
    &=& \alpha^2 \lambda^n e^{2 \alpha} (\prod_{i=1}^{n} w_i ) \cdot W_A^{a-2} (W-W_A)^{n-a-2}\\
    &=& \alpha^2 \lambda^n e^{2 \alpha} (\prod_{i=1}^{n} w_i) \cdot (W_A(W-W_A))^{-2} W_A^{a} (W-W_A)^{n-a}.
\end{eqnarray*}
  But $x^a(W-x)^{n-a}$ is maximised at $x=\frac{a}{n}W$, so
\[ W_A^{a} (W-W_A)^{n-a} \leq (\frac{a}{n} W)^{a} (\frac{n-a}{n} W)^{n-a} = a^a (n-a)^{n-a} \left(\frac{W}{n} \right)^n.\]
  Thus
\[  \ttau(\cf_A) \ttau(\cf_{\bar{A}}) \leq \alpha^2 \lambda^n e^{2 \alpha} \prod_{i=1}^{n} w_i \cdot (W_A(W-W_A))^{-2} a^a (n-a)^{n-a} \left(\frac{W}{n} \right)^n.\]
  Now by Stirling's formula, there are positive constants $c_1$ and $c_2$ such that for each positive integer $k$
\[  c_1 k^{k+\frac12} e^{-k} \leq k! \leq c_2 k^{k+\frac12} e^{-k}. \]
  Thus
\begin{eqnarray*}
  \sum_{a=1}^{n-1} {n \choose a} a^{a} (n-a)^{n-a}
  &=& n! \sum_{a=1}^{n-1} \frac{a^a}{a!} \frac{(n-a)^{n-a}}{(n-a)!}\\
  & \leq & c_1^{-2} n! e^n \sum_{a=1}^{n-1} (a(n-a))^{-\frac12}.
\end{eqnarray*}
  But the sum in this last expression is $O(1)$, so 
  %
\[  \sum_{a=1}^{n-1} {n \choose a} a^{a} (n-a)^{n-a} \; \leq \; c_3 n! e^n \; \leq \; c_4 n^{n+\frac12} \]
  for some constants $c_3$ and $c_4$. It follows that
\[ \sum_{ A \subseteq [n]} (W_A W_{\bar{A}})^2 \ \ttau(\cf_A) \ttau(\cf_{\bar{A}})  \leq
    c_5 \alpha^2 \lambda^n e^{2 \alpha} (\prod_{i=1}^{n} w_i) W^n n^{\frac12} = c_6 \, \ttau(\ct_n) W^2 n^{\frac12} \]
  for some constant $c_5$, and $c_6 = c_6(\alpha) = c_5 \alpha e^{2 \alpha}$.
  Let us introduce a piece of notation: for $z>0$ let 
\[ \sum_{z \leq W_A \leq W/2} s_A := \sum \{ W_A \ \ttau(\cf_A) \ttau(\cf_{\bar{A}}): A \subseteq [n], z \leq W_A \leq W/2\}. \]
  Then
 \begin{equation} \label{eqn.end}
  \sum_{z \leq W_A \leq W/2} s_A \leq  z^{-1} (\frac{W}{2})^{-2} \sum_{ A \subseteq [n]} (W_A W_{\bar{A}})^2 \ 
  \ttau(\cf_A) \ttau(\cf_{\bar{A}}) 
  \leq 4 c_6 \ttau(\ct_n) z^{-1} n^{\frac12}.
\end{equation}
  Now we argue as in the proof of Proposition 5.2 of~\cite{cmcd08}.
  Consider a disconnected graph $G$ on $[n]$, and denote $\wfrag(G)$ by $z$.
  We claim that there is a union of components with weight in the interval $[z/2, W/2]$.
  To see this let $b=W-z$, so that $b$ is the biggest weight of a component.
  Note that $\lfloor \frac{W+b}{2} \rfloor - \lceil \frac{W-b}{2} \rceil +1 \geq b$,
  and so there are at least $b$ integers in the list
  $\lceil \frac{W-b}{2} \rceil,\ldots,\lfloor \frac{W+b}{2} \rfloor$.
 %
  Thus by considering adding components one at a time we see that there is a union of components, with vertex $A$ say,
  such that $W_A$ is in this set.  Then $W_A \geq \lceil \frac{W-b}{2} \rceil \geq z/2$ and
  $W-W_A \geq W - \lfloor \frac{W+b}{2} \rfloor \geq z/2$.  Thus $A$ or $\bar{A}$ is as required.
  
  From the above, there is an injection from the set of forests $F \in \cf_n$ with $\wfrag(F) \geq z$
  to the set
  of triples $A, F_A, F_{\bar{A}}$ where $A \subseteq [n]$, $z/2 \leq \wfrag(F)/2 \leq W_A \leq W/2$,
  $F_A \in \cf_A$, $F_{\bar{A}} \in \cf_{\bar{A}}$ and where $\ttau(F)= \ttau(F_A) \ttau(F_{\bar{A}})$.
  It follows that
\begin{eqnarray*}
  \ttau(\cf_n) \, \E[X 1_{X \geq z}] &=&
     \sum_{F \in \cf_n: {\small \wfrag}(F) \geq z} \ttau(F) \wfrag(F)\\
  & \leq &
     \sum_{A \subseteq [n], z/2 \leq W_A \leq W/2} \; \sum_{F_A \in \cf_A} \sum_{F_{\bar{A}} \in \cf_{\bar{A}}}
     2 W_A \ \ttau(F_A) \ttau(F_{\bar{A}})\\
  & = &    2 \sum_{z/2 \leq W_A \leq W/2} s_A.
\end{eqnarray*} 
  Hence
\[ \E[X 1_{X > \eps W}] \leq \frac{2}{\ttau(\cf_n)} \sum_{\eps W/2 \leq W_A \leq W/2} s_A =O(W^{-\frac12}) \]
  by~(\ref{eqn.end}), and the proof of Theorem~\ref{prop.fragbound2} is complete.


\section{Proofs for rooted graphs}
\label{sec.rooted}

  In this section we prove Theorems~\ref{prop.rooted-forests} and~\ref{prop.rooted} and on rooted graphs.
  First we prove Theorem~\ref{prop.rooted},
  following the lines of the proof of Theorem~\ref{prop.tauconn}.
  \smallskip
  
  \begin{proofof} {Theorem~\ref{prop.rooted}}
  As before, it suffices to assume that $\ca$ is $\ca_n$ for some $n$.
  Let $1\leq k \leq n-1$.
  Let $\cp$ be the set of pairs $(G^o,e)$ where $G^o$ is a rooted graph on $[n]$ with $k$ components and $e$ is a bridge
  in $G^{o}$ (which we may think of as being oriented towards the root of the component).
  Let $\cq$ be the set of pairs $(H^{o},uv)$, where $H^o$ is a rooted graph on $[n]$ with $k+1$ components, and $uv$ is an
  ordered pair of vertices such that $u$ is the root of its components and $v$ is in a different component.

  There is a natural bijection between $\cp$ and $\cq$.  Given $(G^o,e) \in \cp$, if $u$ is
  the end of $e$ further from the root, we delete the edge $e$ and make $u$ a new root: given $(H^{o},uv) \in \cq$,
  we add an edge between $u$ and $v$ and no longer have $u$ as a root.
  Further, if the $k+1$ components of $H^{o}$ have $n_1,\ldots, n_{k+1}$ vertices respectively,
  then the number of pairs $uv$ such that $(H^{o},uv) \in \cq$ is
  $\sum_{i=1}^{k+1} (n-n_i) = kn$. 
  
  Now much as in the proof of Theorem~\ref{prop.tauconn} we have  
\begin{eqnarray*}
   \tau(\ca_n^{k \, o}) \cdot(n\!-\!k)
    & \geq &
  \sum_{(G^{o},e) \in \cp}  \tau(G^{o}) \; \geq \; \frac{\lambda}{\nu}
  \sum_{(H^{o},uv) \in \cq} \tau(H^{o})\\
  & = & \frac{\lambda}{\nu} \
    \tau(\ca_n^{k+1 \, o}) \cdot k n.
\end{eqnarray*}
  Therefore
\begin{equation} \label{eqn.rooted}
   \tau(\ca_n^{k\!+\!1 \, o}) \leq
   \frac{n\!-\!k}{n} \frac{\nu}{\lambda k} \ \tau(\ca_n^{k \, o}).
\end{equation}
  %
  Thus for $R^{o} \in_{\tau} \cao$
\[
  \pr(\kappa(R^{o}) = k+1) \leq \frac{\nu}{\lambda k} \pr(\kappa(R^{o}) = k) \;\; \mbox{ for each } k=1,2,\ldots,  \]
  and we may complete the proof as for Theorem~\ref{prop.tauconn}.
\end{proofof}
  \bigskip
  
\begin{proofof}{Theorem~\ref{prop.rooted-forests}}
  Consider rooted forests 
  and let $R^{o}_n \in_{\tau} \cf^{o}$.
  Then the first two inequalities above hold at equality, so
\[  
  \pr(\kappa(R^{o}_n) = k+1) = \frac{n-k}{n} \frac{\nu}{\lambda k} \pr(\kappa(R^{o}_n) = k) \;\; \mbox{ for each } k=1,2,\ldots.
\]
  Hence $\kappa(R^{o}_n)$ converges in distribution to $1+\Po(\frac{\nu}{\lambda})$ as $n \to \infty$. 
  Thus to complete the proof of Theorem~\ref{prop.rooted-forests}, it remains only to show that
  $\E[\frag(R_n^{o})] \to \infty$ as $n \to \infty$.
  By what we have just proved and the fact that $|\ct_n^{o}|=n^{n-1}$ 
\[ \tau(\cf_n^{o}) \sim e^{\nu/\lambda} \ \tau(\ct_n^{o}) = \nu e^{\nu/\lambda} (\lambda n)^{n-1}. \] 
  We now obtain a lower bound on $\E[\frag(R_n^{o})]$ by considering forests with two components.
  With sums over say $\log n < j < n/2$, and using Stirling's formula, we have
\begin{eqnarray*}
  \E[\frag(R_n^{o})] & \geq & \tau(\cf_n^{o})^{-1} \cdot \sum_j {n \choose j} j \ \tau(\ct_j^{o}) \tau(\ct_{n-j}^{o})\\
  & \sim & \left(\nu e^{\nu/\lambda}\right)^{-1} (\lambda n)^{-(n-1)} n!
           \sum_j j \frac{(\lambda j)^{j-1}}{j!} \frac{(\lambda(n-j))^{n-j-1}}{(n-j)!}\\ 
  &=& \Theta(1) \cdot \frac{n!}{n^{n-1}} \sum_j \frac{j^j}{j!} \frac{(n-j)^{n-j-1}}{(n-j)!}\\
  &=& \Theta(1) \cdot n^{\frac32} e^{-n} \sum_j j^{-\frac12} e^j (n-j)^{-\frac32} e^{n-j}\\
  &=& \Theta(1) \cdot \sum_j j^{-\frac12} \; = \; \Theta(n^{\frac12}),
\end{eqnarray*}  
  and we are done.
\end{proofof}

  \bigskip

  \noindent
  {\em Aside on the unrooted case}
  
  From the inequality~(\ref{eqn.rooted}) above we may quickly deduce the result in Theorem~\ref{prop.tauconn} that,
  for $R \in_{\tau} \ca$ (where $\ca$ is finite and bridge-addable and not rooted) we have
  $\; \pr(R \mbox{ is connected}) \geq e^{-\nu/\lambda}$.  
  To see this note that we may assume as usual that $\ca$ is $\ca_n$,
  and note also that each graph $G \in \ca_n$ with $\kappa(G)=k+1$
  yields at least $n-k$ rooted graphs in $\cao_n$. 
  Now let $\cc_n$ be the set of connected graphs in $\ca_n$, and use~(\ref{eqn.rooted}) once and then $k-1$ further times:
  we find
\begin{eqnarray*} 
  \tau(\ca_n^{k+1}) 
  &\leq &
     \frac1{n-k}  \tau(\ca_n^{k\!+\!1 \, o}) 
   \; \leq \;
    \frac1{n} \frac{\nu}{\lambda k} \tau(\ca_n^{k \, o}) \\ 
   &\leq &
    \frac1{n} \left(\frac{\nu}{\lambda}\right)^k \frac1{k!} \tau(\cc_n^{o}) \; = \;
   \left(\frac{\nu}{\lambda}\right)^k \frac1{k!} \tau(\cc_n).
\end{eqnarray*}
  Thus
\[ \tau(\ca_n) \leq \sum_{k \geq 0} \left(\frac{\nu}{\lambda}\right)^k \frac1{k!} \tau(\cc_n) = e^{\nu/\lambda} \tau(\cc_n),\]
  and the proof is complete.
  

\section{Concluding remarks}
\label{sec.concl}

  Consider $R_n \inu \ca$ where $\ca$ is bridge-addable.
  Starting from the lower bound $e^{-1}$ 
  on the probability that $R_n$ is connected and the stronger stochastic bound
  $\kappa(R_n) \leq_s 1+ \Po(1)$ on the number of components, 
  it was natural to enquire to what extent the bounds could be improved to match known results for forests.
  Our results suggest that we should think of these bounds as being out asymptotically by a factor~2
  in the `parameter', in that the ratio $\lambda/\nu$ should be doubled
  (though the corresponding bounds are tight in the rooted case).
  
  The central conjecture on connectivity for a bridge-addable set of graphs is from~\cite{msw06},
  and was re-stated here as Conjecture~\ref{conj.b-add}.
  As we noted earlier, some asymptotic improvement has been made on the bound $e^{-1}$
  \cite{bbg07,bbg10,sn2012}; and the full improvement to $e^{-\frac12}$ has been achieved,
  but only when we make the stronger assumption that $\ca$ is bridge-alterable~\cite{amr2011,kp2011}.

  In the present paper we considered corresponding improvements concerning the distribution of $\kappa(R_n)$,
  and introduced new bounds on $\frag(R_n)$ (the number of vertices left when we remove 
  a largest component)
  which also match results for forests asymptotically.  Further, we set these results in a general framework
  emphasising the role of bridges, rather than just considering uniform distributions.
  
  For random rooted graphs our non-asymptotic results already match those for forests.
  In each other case, to achieve results asymptotically matching those for forests we have had to assume that the set
  $\ca$ of graphs is bridge-alterable.
  It is natural to ask whether these results actually hold under the weaker assumption that $\ca$
  is bridge-addable.  Conjecture~\ref{conj.b-add} is still open. 
  We propose two further conjectures (for uniform distributions).
  The first concerns a possible extension of inequalities~(\ref{eqn.b-alt-frag2}) and~(\ref{eqn.b-alt-comps1})
  from bridge-alterable to bridge-addable.  
  
\begin{conjecture} \label{conj.b-add-comps}
  If $\ca$ is bridge-addable and $R_n \in_u \ca$ then
 \[ \mbox{(a)} \hspace{.1in} \kappa(R_n) \leq_s 1+ \Po(\frac12) \;\;\mbox{ asymptotically},\]
  and 
 \[ \mbox{(b)} \hspace{.1in} \limsup_{n \to \infty} \E[\frag(R_n)] \leq 1. \hspace{.9in}\]
\end{conjecture}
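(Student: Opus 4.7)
The plan is to follow the framework of Section~\ref{sec.asympt1} with up-sets of forests in place of full equivalence classes. Fix a bridge-addable class $\ca$, a bridgeless $G_0 \in \ca_W$ with components of weights $w_1,\ldots,w_n$, and let $\mathcal U \subseteq \cf_n$ correspond via contraction to $\ca \cap [G_0]$. Bridge-addability translates exactly into $\mathcal U$ being an \emph{up-set} under forest containment: $F \in \mathcal U$ and $F \subseteq F' \in \cf_n$ imply $F' \in \mathcal U$. Since $\ca_W = \bigsqcup_{G_0} (\ca \cap [G_0])$ still holds by partitioning on $\tilde G$, it suffices to prove parts (a) and (b) uniformly in $(G_0, \mathcal U)$ as $W \to \infty$.

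For part~(a), I would aim to reproduce~(\ref{eqn.ratio_1_2}) with $\RF$ drawn from $\mathcal U$ under the weight $\mass{\cdot}$, and then apply Lemma~\ref{lem.connect_1} and Lemma~\ref{lem.po1} exactly as in Section~\ref{sec.asympt1}. The flow identity~(\ref{eqn.sumflow}) restricts cleanly to $\mathcal U$ on the target side, since every $F \in \mathcal U \cap \cf_n^2$ has all of its cross-edge extensions in $\mathcal U \cap \cf_n^1$. On the source side, a spanning tree $T \in \mathcal U$ contributes only those deletions $e$ for which $T \setminus e \in \mathcal U$; this restriction can only reduce the relevant analog of $\E[c(\RT,k)]$, so one hopes~(\ref{claim.half}) persists with the same constant. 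The induction in Lemma~\ref{lem.connect_1} then applies to $\mu_n(G) = \mass{G} \cdot {\bf 1}_{G \in \mathcal U}$, which is componentwise multiplicative provided the up-set structure factorises on vertex subsets, a property inherited from bridge-addability of $\ca$.

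For part~(b), I would follow Section~\ref{sec.fragproof}. Applying Lemma~\ref{lem.frag1} to the bridge-addable subclass $\ca'_n = \{G \in \ca_n : \frag(G) \leq \epsilon n\}$ gives $\E[\frag(R_n) {\bf 1}_{\frag(R_n) \leq \epsilon n}] \leq (1-\epsilon)^{-1} \nu/\lambda$, and what remains is $\E[\frag(R_n) {\bf 1}_{\frag(R_n) > \epsilon n}] = o(1)$. Within $(G_0, \mathcal U)$ the Stirling-based weighted-forest estimate transfers once one has appropriate upper bounds on $\ttau$-measures restricted to $\mathcal U$. Two ingredients survive bridge-addability cleanly: Theorem~\ref{prop.tauconn} applies directly to the bridge-addable subfamilies induced by $\mathcal U$ on vertex subsets, giving the analog of $\ttau(\cf_A) \leq e^{\alpha} \ttau(\ct_A)$; and the Moon--R\'enyi tree count furnishes an automatic upper bound on $\ttau$ of $\mathcal U$-spanning trees, since not all trees need lie in $\mathcal U$. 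The chain of inequalities culminating in~(\ref{eqn.end}) then yields $W^{-1/2}$-decay, possibly with worse constants.

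The main obstacle, shared by both parts, is the survival of~(\ref{claim.half}) under conditioning on an up-set: one is asking that restricting the random weighted tree $\RT$ to $\mathcal U$ does not increase the balanced-cut functional $\sum_k \E[c(\RT,k)]/(k(W-k))$ beyond $\tfrac12 + o(1)$. Morally this is a negative-correlation statement between the monotone event $\{T \in \mathcal U\}$ and the balanced-cut count, but the latter is neither monotone nor log-supermodular on the spanning-tree lattice in any obvious way, so a direct FKG-type argument is unavailable. The technical heart of Conjecture~\ref{conj.b-add-comps} is therefore a conditional version of the balanced-cut lemma of~\cite{amr2011} for up-sets of weighted spanning trees, and that is where I expect the real difficulty to lie.
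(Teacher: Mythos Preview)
The statement is Conjecture~\ref{conj.b-add-comps}, which the paper presents as an \emph{open problem} in Section~\ref{sec.concl}; there is no proof in the paper to compare against. The paper records only partial progress: the work of \cite{bbg07,bbg10} combined with Lemma~\ref{lem.po1} yields part~(a) with $\Po(\alpha)$, $\alpha \approx 0.7983$, in place of $\Po(\tfrac12)$; and for part~(b) the paper merely observes that it would suffice to establish~(\ref{eqn.frageps}) under bridge-addability alone.

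Your proposal is a reasonable research outline rather than a proof, and you yourself correctly identify the crux: whether the balanced-cut estimate~(\ref{claim.half}) survives when $\RT$ is conditioned to lie in an up-set $\mathcal U$. That is precisely where the bridge-alterable proof breaks, and it remains open. There is a further technical gap you pass over too quickly. You assert that the up-set structure ``factorises on vertex subsets, a property inherited from bridge-addability of $\ca$'', but bridge-addability gives only upward closure in the edge order, not any product decomposition over components; consequently the measure $\mu_n(G) = \mass{G}\cdot {\bf 1}_{G \in \mathcal U}$ is not in general multiplicative on components, and Lemma~\ref{lem.connect_1} is not directly applicable even granting an up-set version of~(\ref{claim.half}). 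The same issue recurs in your sketch for part~(b): bounding $\ttau(\cf_A)$ by $e^{\alpha}\ttau(\ct_A)$ within $\mathcal U$ presupposes a well-behaved restriction $\mathcal U_A$ for $A \subsetneq [n]$, and bridge-addability does not supply one with the required compatibility. These obstacles are secondary to the balanced-cut question, but they are genuine and are not handled by what you have written.
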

  The work of Balister, Bollob{\'a}s and Gerke~\cite{bbg07,bbg10} mentioned earlier
  gives some progress on part (a) of this conjecture: the proofs there
  together with Lemma~\ref{lem.po1} here show that, with $\alpha=0.7983$,
\[ \kappa(R_n) \leq_s 1+ \Po(\alpha) \;\;\mbox{ asymptotically};\]
  and this bound gives
\[ \limsup_{n \to \infty} \E[\kappa(R_n)] \leq 1+ \alpha \approx 1.7983 \]
  as may be seen by arguing as at the end of the proof of Theorem~\ref{prop.asympt-conn}.
  Note also that to establish part (b) of the conjecture,
  it suffices to show that~(\ref{eqn.frageps}) holds whenever $\ca$ is bridge-addable.
  \smallskip
  
  Finally, we propose a strengthened non-asymptotic version of the last conjecture,
  along the lines of Conjecture 5.1 in~\cite{amr2011} or Conjecture 1.2 of~\cite{bbg10}.
  %
\begin{conjecture} \label{conj.b-add-strong}
  If $\ca$ is bridge-alterable, $n$ is a positive integer, $R_n \in_u \ca$ and $R_n^{\cf} \inu \cf$ then
\[ \kappa(R_n) \leq_s \kappa(R_n^{\cf})\;\; \mbox{ and } \;\; \E[\frag(R_n)] \leq \E[\frag(\RF_n)]. \]
\end{conjecture}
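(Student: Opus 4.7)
The plan is to follow the scheme of Section~\ref{sec.asympt1} but tighten every step to hold for each fixed $n$ rather than only asymptotically. Since $\ca$ is bridge-alterable, $\ca_n$ partitions into the equivalence classes $[G_0]$ indexed by the bridgeless graphs $G_0 \in \ca_n$, so by averaging it suffices to prove both parts of the conjecture for $R_n$ drawn uniformly from a single class $[G_0]$. Applying the contraction map $g$ used in the proof of~(\ref{eqn.stocheq}), with $w_1,\ldots,w_{n^\ast}$ the component sizes of $G_0$ (so $\sum w_i = n$ and $n^\ast = \kappa(G_0)$), this reduces to a purely forest-theoretic comparison:
\begin{equation} \label{eqn.wfmain}
  \kappa(\RF) \; \leq_s \; \kappa(R_n^{\cf}) \quad \mbox{and} \quad \E[\wfrag(\RF)] \; \leq \; \E[\frag(R_n^{\cf})],
\end{equation}
where $\RF$ is the weighted random forest on $[n^\ast]$ with $\pr(\RF=F) \propto \prod_i w_i^{d_F(i)}$. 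When $n^\ast=n$ and $w \equiv 1$, $\RF$ coincides with $R_n^{\cf}$, so~(\ref{eqn.wfmain}) is tight and the conjecture is equivalent to the monotonicity statement that subdividing the weight vector toward the all-ones vector stochastically increases $\kappa$ and $\wfrag$.

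The natural route to~(\ref{eqn.wfmain}) is to sharpen the tree-flow ratio that drives the argument of Section~\ref{sec.asympt1}. Setting
\[ \rho_w \; := \; \frac{\pr(\RF \in \cf_{n^\ast}^2)}{\pr(\RF \in \cf_{n^\ast}^1)} \; = \; \sum_{k \geq 1} \frac{\E[c(\RT,k)]}{k(W-k)} \]
as in~(\ref{eqn.fn2_bound}), the inequality~(\ref{claim.half}) gives only $\rho_w \leq \tfrac12(1+\eta)$ for large $W$. One would aim to prove the exact non-asymptotic comparison $\rho_w \leq \rho_{\mathbf 1}$ for every weight vector $w$ of length at most $n$ summing to $n$, where $\rho_{\mathbf 1}$ is the corresponding ratio for the uniform forest on $[n]$, along with analogous comparisons at every higher level. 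A Kirchhoff-type closed form for $\sum_T \prod_i w_i^{d_T(i)}$ (cf.\ Pitman~\cite{pitman99}) suggests an algebraic route: express both ratios as determinants and compare them termwise via a majorization argument in $w$. Given such exact ratio comparisons, a tight form of Lemma~\ref{lem.connect_1} yields the stochastic domination of $\kappa$; the fragment bound then follows by a non-asymptotic refinement of Section~\ref{sec.fragproof}, using Lemma~\ref{lem.frag1} with $\beta$ close to $\tfrac12$ for the bulk and the $\kappa$-domination for the tail.

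The main obstacle is the exact ratio comparison $\rho_w \leq \rho_{\mathbf 1}$. The proof of~(\ref{claim.half}) in~\cite{amr2011} proceeds through local-limit convergence of the degree-biased random tree to an infinite skeleton and is inherently lossy at any finite $W$; removing the $(1+\eta)$ slack appears to require a new combinatorial or algebraic identity exposing $\rho_w$ as a Schur-concave (or otherwise suitably monotone) function of $w$. An attractive alternative is a direct monotone coupling in which a single weight $w_i \geq 2$ is subdivided into two positive parts placed on a new vertex; if such a move could be shown to stochastically increase $\kappa$ and $\wfrag$, iterating it would deliver~(\ref{eqn.wfmain}) without any ratio analysis. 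The difficulty is that subdivision simultaneously changes the vertex count, the degree-bias prefactor, and the set of admissible forests, so constructing an explicit coupling is not transparent. Either ingredient — the exact ratio identity or a subdivision coupling — would close the conjecture, but both lie beyond the tools developed here.
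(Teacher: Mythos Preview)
The statement you are addressing is a \emph{conjecture} in the paper (Conjecture~\ref{conj.b-add-strong}), not a theorem: the paper provides no proof, proposing it explicitly as an open problem in Section~\ref{sec.concl}. So there is no proof of the paper's to compare against.

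Your proposal is also not a proof but a programme, and you say so yourself: the reduction to the weighted-forest comparison~(\ref{eqn.wfmain}) via the equivalence-class decomposition and the contraction map $g$ is sound and is exactly the machinery of Section~\ref{sec.asympt1}, but the two ingredients you isolate as needed --- either an exact, non-asymptotic ratio comparison $\rho_w \leq \rho_{\mathbf 1}$ (removing the $(1+\eta)$ slack from~(\ref{claim.half})), or a monotone subdivision coupling --- are precisely the missing pieces, and you correctly note that neither is supplied by the tools of~\cite{amr2011} or the present paper. One further point: even granting $\rho_w \leq \rho_{\mathbf 1}$ at the bottom level, the step ``a tight form of Lemma~\ref{lem.connect_1} yields the stochastic domination of $\kappa$'' is optimistic. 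Lemma~\ref{lem.connect_1} gives only a Poisson-type bound $\mu_n(\cg_{[n]}^{k+1}) \leq (\alpha/k)\,\mu_n(\cg_{[n]}^k)$ from a uniform hypothesis on two-component ratios; to dominate $\kappa(R_n^{\cf})$ level by level you would need the exact forest ratios $\pr(\kappa(R_n^{\cf})=k+1)/\pr(\kappa(R_n^{\cf})=k)$ on the right-hand side, which is a strictly stronger comparison than the lemma delivers. In short, your outline identifies the right reduction and the right obstacles, but the conjecture remains open.
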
  
  Establishing the stronger version of this conjecture, in which we assume only that $\ca$ is bridge-addable, would 
  of course be even better!


{\small

}  
  
\end{document}